\documentclass[11pt]{amsart}
\usepackage{amssymb, adjustbox, enumerate, amsbsy, stmaryrd}
\usepackage{geometry}
\geometry{a4paper,top=3cm,bottom=3cm,left=3.5cm,right=3.5cm}

\hyphenpenalty=5000
\tolerance=1000

\usepackage{todonotes}

\usepackage{amsfonts, amssymb, amscd}
\numberwithin{equation}{section}

\usepackage[symbol]{footmisc}

\usepackage{bm}
\usepackage{verbatim}
\usepackage{mathrsfs}
\usepackage{graphicx}
\usepackage{tikz-cd}
\usepackage{subcaption}
\usepackage{listings}
\usepackage{subfiles}
\usepackage[toc,page]{appendix}
\usepackage{mathtools}
\usepackage{comment}
\usepackage{enumerate}
\usepackage{enumitem}
\usepackage[all]{xy}

\usepackage{graphicx}
\graphicspath{{images/}}

\usepackage{appendix}
\usepackage{hyperref}
\hypersetup{
    colorlinks=true,
    citecolor=red,
    linkcolor=blue,
    filecolor=magenta,      
    urlcolor=red,
}
\lstset{
  basicstyle=\ttfamily,
  columns=fullflexible,
  frame=single,
  breaklines=true,
  postbreak=\mbox{\textcolor{red}{$\hookrightarrow$}\space},
}

\newcommand{\bb}{\bm{b}}
\newcommand{\Mm}{{\bf{M}}}
\newcommand{\Bb}{{\bf{B}}}
\newcommand{\PP}{{\bf{P}}}

\newcommand{\Dd}{{\bf{D}}}

\newcommand{\Spec}{\mathrm{Spec}}
\newcommand{\Src}{\mathrm{Src}}
\newcommand{\Spr}{\mathrm{Spr}}

\newcommand{\Cc}{\mathbb{C}}

\newcommand{\Pp}{\mathbb{P}}
\newcommand{\Qq}{\mathbb{Q}}

\newcommand{\Rr}{\mathbb{R}}

\newcommand{\Zz}{\mathbb{Z}}

\newcommand{\Center}{\operatorname{center}}

\newcommand{\Exc}{\operatorname{Exc}}

\newcommand{\Bir}{\operatorname{Bir}}

\newcommand{\proj}{\operatorname{Proj}}

\newcommand{\Supp}{\operatorname{Supp}}
\newcommand{\Ngklt}{\operatorname{Ngklt}}
\newcommand{\Nlc}{\operatorname{Nlc}}

\newcommand{\mult}{\operatorname{mult}}

\newcommand{\cont}{\operatorname{cont}}
\newcommand{\Gal}{\operatorname{Gal}}

\newcommand{\lf}{\lfloor}
\newcommand{\rf}{\rfloor}

\newcommand{\Oo}{\mathcal{O}}

\newcommand{\NE}{\mathrm{NE}}

\newcommand{\Pic}{\mathrm{Pic}}

\newtheorem{thm}{Theorem}[section]

\newtheorem{lem}[thm]{Lemma}

\theoremstyle{definition}
\newtheorem{defn}[thm]{Definition}

\theoremstyle{definition}

\newtheorem{defthm}[thm]{Definition-Theorem}

\newtheorem{ex}[thm]{Example}

\theoremstyle{definition}

\begin{document}

\title{Contraction theorem for generalized pairs}
\author{Lingyao Xie}


\address{Department of Mathematics, The University of Utah, Salt Lake City, UT 84112, USA}
\email{lingyao@math.utah.edu}

\subjclass[2020]{14E30,14C20,14E05}
\date{\today}

\begin{abstract}
We use Koll\'ar's gluing theory to prove the contraction theorem for generalized pairs.
\end{abstract}

\maketitle
\tableofcontents

\section{Introduction}

We work over the field of complex numbers $\mathbb C$. 

In recent years, it has become increasingly clear that it is important to generalize results from the MMP for pairs to the MMP for generalized pairs, see \cite{Bir21} and references therein. One of the most important conjectures in the MMP is the abundance conjecture. It is expected that if $(X,D)$ is an lc pair and $K_X+D$ is nef then $K_X+D$ is semi-ample. An important result in this direction is \cite{FG14,HX16}, where it is shown that log abundant nef lc pairs are semi-ample. Unluckily, this is false for generalized lc pairs even if we assume log abundance (\cite[Example 1.4]{LX22a}). Nonetheless, some weaker semi-ampleness results related to the MMP are still believed to be true for generalized pairs and will lead to many interesting applications. Therefore it is important to understand exactly where the semi-ampleness starts to fail and what assumptions one should add to avoid this failure.



As in the log canonical case (cf. \cite[Section 5.5]{Kol13}), a generalized log canonical structure gives a stratification (called glc stratification) of a variety with respect to its glc centers (\cite[Section 4]{LX22b}). Thanks to the $\Pp^1$-link techniques developed in \cite[Theorem 1.4]{FS20}(cf. \cite[Theorem 3.5]{Bir20}), the glc stratification turns out to be nice and useful. For instance, we use the glc stratification to prove that any glc singularity is Du Bois (\cite[Section 6]{LX22b}). More importantly, we can do adjunction to glc centers via the generalized canonical bundle formula developed in \cite{Fil20,HL21b,JLX22}, thus this stratification allows us to use Koll\'ar's gluing theory to prove semi-ampleness properties by induction on the dimension. The essential difficulty is to show that some induced equivalence relation is finite. In the lc pair case, as explained in \cite{HX13,HX16}, these relations arise from some subgroup of $\Bir(V,\Delta_V)$, where $V$ is some hereditary lc center (\cite[Definition 5.30]{Kol13}), hence the finiteness of \Bb-representations will imply the required finiteness of relations. In the glc pair case, this finiteness of \Bb-representations can fail in general, which is the main reason that the abundance conjecture is not true for generalized pairs. However, there are certain situations where we can actually show the finiteness of Koll\'ar's relations regardless of the \Bb-representations. In these cases, we expect the corresponding semi-ampleness results hold.
For example, in \cite{LX22b} Jihao Liu and the author prove the existence of glc flips and an analogue of \cite[Theorem 1.1]{HX13} in the setting of generalized pairs. 


Except for the semi-ampleness, the Minimal Model Program seems to work pretty well for generalized pairs. For example, termination of flips and existence of minimal models or Mori fiber spaces hold for many generalized pairs with some standard assumptions similar to the case of usual lc pairs (\cite[Theorem 4.1]{HL22},\cite[Theorem 1.1]{LT22},\cite[Theorem 3.17]{Has22},\cite[Theorem 1.2, 1.3]{LX22a}). As summarized in \cite{HL21a}, many very general results concerning running the MMP for lc/dlt pairs are still true for glc/gdlt pairs. Moreover, \cite{HL21a} established the Cone theorem for glc pairs, and also established the Contraction theorem when $\Mm_X$ is $\Rr$-Cartier. Their approach involves replacing the generalized pairs by some auxiliary usual pairs (\cite[Theorem 4.1]{HL21a}) with the help of some ample divisor. However, there are essential differences between glc pairs and the lc pairs when the ambient variety is not $\Qq$-factorial (\cite[Example 2.1]{LX22b}). Hence in order to obtain the contraction theorem for glc pairs in full generality, which is equivalent to showing some semi-ampleness, we have to extend the theory for generalized pairs instead of just using theorems developed for lc pairs.

The main purpose of this paper is to use the glc stratification developed in \cite{LX22b} and Koll\'ar's gluing theory to prove the following semi-ampleness theorem: 

\begin{thm}[$=$Theorem \ref{thm: semi-ampleness for glc crepant log structure with log bigness}]\label{thm: main thm 1.1}
Let $(X,\Delta,\Mm)/U$ be a glc $\Qq$-pair, and $L$ a nef $\Qq$-divisor such that $L-(K_X+\Delta+\Mm_X)$ is nef and log big$/U$ with respect to $(X,\Delta,\Mm)$. Then $L$ is semi-ample over $U$. 
\end{thm}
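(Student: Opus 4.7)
The plan is to proceed by induction on $\dim X$. In the base case where $(X,\Delta,\Mm)$ is gklt, the hypothesis reduces to $L$ nef with $L-(K_X+\Delta+\Mm_X)$ nef and big, and semi-ampleness of $L$ follows from the base-point-free theorem for generalized klt pairs (reducing to the $\Rr$-Cartier setting of \cite{HL21a} if necessary). For the general glc case, I would first pass to a gdlt modification of $(X,\Delta,\Mm)$; since semi-ampleness descends along birational contractions, we may assume $(X,\Delta,\Mm)$ itself is gdlt. Set $Z := \lfloor\Delta\rfloor$ with its reduced structure and let $\nu\colon Z^\nu\to Z$ denote its normalization.

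Using generalized divisorial adjunction \cite{Fil20,HL21b,JLX22} (see also \cite[Section~4]{LX22b}), equip $Z^\nu$ with an induced glc structure $(Z^\nu,\Delta_{Z^\nu},\Mm)$ satisfying $K_{Z^\nu}+\Delta_{Z^\nu}+\Mm_{Z^\nu} \sim_\Qq \nu^{*}\big((K_X+\Delta+\Mm_X)|_Z\big)$. Then $L|_{Z^\nu}$ is nef and $L|_{Z^\nu}-(K_{Z^\nu}+\Delta_{Z^\nu}+\Mm_{Z^\nu})$ is nef and log big with respect to $(Z^\nu,\Delta_{Z^\nu},\Mm)$, since log bigness on a glc pair restricts to log bigness on each glc center. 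By the inductive hypothesis, $L|_{Z^\nu}$ is semi-ample over $U$. On the other hand, $(X\setminus Z,\Delta|_{X\setminus Z},\Mm)$ is gklt, so the base-point-free theorem produces a semi-ample morphism realizing $L$ over an open neighborhood of $X\setminus Z$. It remains to glue these two compatible pieces into a single semi-ample morphism on $X$.

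To carry out the gluing, I would invoke Koll\'ar's machinery \cite[Chapter~9]{Kol13}, following the overall strategy of \cite{HX13,HX16} as adapted to glc pairs in \cite{LX22b}. The glc stratification of $(X,\Delta,\Mm)$ from \cite[Section~4]{LX22b}, together with the $\Pp^1$-link structure \cite[Theorem~1.4]{FS20}, produces a set-theoretic equivalence relation $R \rightrightarrows Z^\nu$, and one must verify that (i) the morphism defined by $L|_{Z^\nu}$ is $R$-invariant and (ii) $R$ is a finite equivalence relation. Statement (i) is routine because $R$ is built from pullbacks along $\nu$ and identifications along common glc centers, both of which are compatible with the inductively constructed semi-ample map.

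Statement (ii) is the main obstacle. In the classical lc setting the finiteness of the analogous relation follows from the finiteness of $\Bb$-representations, but as emphasized in the introduction, $\Bb$-representations can fail to be finite for generalized pairs, and indeed this is precisely why the abundance conjecture is false in that setting. The log bigness assumption on $L-(K_X+\Delta+\Mm_X)$ is what saves us: on every glc center $V$, the restriction $(L-(K_X+\Delta+\Mm_X))|_V$ is big, so the Iitaka fibration of $L|_V$ is birational and the associated relative $\Bir$-group acts through a finite quotient. Applying this stratum by stratum, as in the proof of the glc analogue of \cite[Theorem~1.1]{HX13} established in \cite{LX22b}, forces the generators of $R$ to lie in a finite group and hence $R$ to be a finite equivalence relation. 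Koll\'ar's descent then produces a semi-ample morphism over $U$ realizing $L$, completing the induction.
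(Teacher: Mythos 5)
There are genuine gaps. The most serious one is your reduction to the gdlt case: replacing $(X,\Delta,\Mm)$ by a gdlt modification $f\colon (Y,\Delta_Y,\Mm)\to (X,\Delta,\Mm)$ does preserve the conclusion (semi-ampleness of $f^*L$ gives semi-ampleness of $L$), but it destroys the hypothesis. Indeed $f^*L-(K_Y+\Delta_Y+\Mm_Y)=f^*\bigl(L-(K_X+\Delta+\Mm_X)\bigr)$ restricts to a non-big divisor on every glc center of $(Y,\Delta_Y,\Mm)$ that is contracted by $f$ (e.g.\ an exceptional divisor lying over a glc center of smaller dimension), so your assertion that ``log bigness restricts to log bigness on each glc center'' fails precisely on the model where you want to apply the inductive hypothesis to $\lfloor\Delta_Y\rfloor$. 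This is exactly the phenomenon flagged in the introduction (``log bigness is not preserved when pulling back to $\Qq$-factorial gdlt models''). The paper avoids it by never changing the hypothesis: one reduces, via Lemma \ref{lem: perturb glc pair to nlc pair} and Fujino's theorem \cite[Theorem 13.1]{Fuj11}, to proving semi-ampleness of $L$ on the non-gklt locus of $X$ itself, uses the gdlt modification only to produce the stratification and gluing data, and applies induction to the Stein factorizations $V_i$ of the images in $X$ of the glc centers, where the restriction of $L-(K_X+\Delta+\Mm_X)$ is indeed still log big. (Relatedly, your plan to ``glue'' the semi-ample map on the open gklt locus with the one on the boundary is not what Koll\'ar's theory does; the passage from semi-ampleness on $\Ngklt(X,\Delta,\Mm)$ to semi-ampleness on $X$ is the vanishing-theorem argument of \cite{Fuj11}, which you never invoke.)

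The second gap is your finiteness mechanism. Bigness of $(L-(K_X+\Delta+\Mm_X))|_V$ does not make $L|_V$ big, so the contraction $g_i\colon V_i\to Z_i$ defined by $L|_{V_i}$ is in general very far from birational, and there is no ``Iitaka fibration is birational, hence the Bir-group acts through a finite quotient'' argument available; finiteness of $\Bb$-representations is neither true nor used in this setting. Moreover, the relation that must be shown finite is not the gluing relation on the normalization of the boundary (that one is finite for free, being induced by a finite morphism), but the relation it induces on the targets $\coprod Z_i$ after composing with the $g_i$; Example \ref{ex: relation is not finite in general} shows this can be infinite without log bigness even though everything upstairs is finite. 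The actual role of log bigness in the paper is structural: Theorem \ref{thm: unique minimal glc center for log big glc crepant structure} shows that each glc center of $Z_i$ is dominated by a \emph{unique} minimal glc center of $V_i$, so every relation downstairs lifts canonically to the finite relation upstairs (through the springs $\Spr(\cdot)$), and only then does \cite[Theorem 9.21]{Kol13} apply, both to construct $Z$ and, via total spaces of line bundles, the ample $H_Z$ with $g^*H_Z=mL$. Your proposal is missing this key lemma, and the step you substitute for it would fail.
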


Since ample divisors are automatically nef and log big, therefore in particular we have: 

\begin{thm}\label{thm: K_X+B+M+A is semi-ample for A ample}
Let $(X,\Delta,\Mm)/U$ be a glc pair and $A$ an ample$/U$ $\Rr$-divisor. Then $K_X+\Delta+\Mm_X+A$ is nef$/U$ if and only if it is semi-ample$/U$. 
\end{thm}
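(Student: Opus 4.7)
The ``if'' direction is immediate. For the converse, set $L := K_X+\Delta+\Mm_X+A$ and suppose $L$ is nef$/U$. The plan is to reduce to Theorem~\ref{thm: main thm 1.1} by a Shokurov-polytope decomposition into $\Qq$-data, where the strict ampleness of $A$ provides the room to arrange nefness of each piece.

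First I would fix a log smooth model on which $\Mm$ descends; on it the glc condition becomes a system of rational linear inequalities in the coefficients of the boundary and of $\Mm$, so glc structures form a rational polytope $P \ni (\Delta,\Mm)$. Combining $P$ with the density of ample $\Qq$-Cartier divisors in the ample$/U$ cone, I would produce a decomposition
\[
(\Delta,\Mm,A)=\sum_{i=1}^{N} r_i(\Delta_i,\Mm_i,A_i),\qquad r_i>0,\ \sum r_i=1,
\]
such that each $(X,\Delta_i,\Mm_i)$ is a glc $\Qq$-pair, each $A_i$ is ample$/U$ $\Qq$-Cartier, and each $L_i:=K_X+\Delta_i+\Mm_{i,X}+A_i$ is nef$/U$ and $\Qq$-Cartier, with $L=\sum r_i L_i$. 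The crucial point is that strict ampleness of $A$ means the $A_i$ can be chosen to absorb the rational approximation errors coming from $(\Delta_i,\Mm_i)$ while still keeping each $L_i$ nef; with $A$ merely nef, this step would fail.

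For each $i$, the divisor $A_i$ is ample$/U$ and therefore nef and log big$/U$ with respect to $(X,\Delta_i,\Mm_i)$. Theorem~\ref{thm: main thm 1.1} applied to the glc $\Qq$-pair $(X,\Delta_i,\Mm_i)/U$ with the nef $\Qq$-divisor $L_i$ then yields that each $L_i$ is semi-ample$/U$. Since $L=\sum r_i L_i$ is a positive $\Rr$-linear combination of semi-ample$/U$ $\Qq$-Cartier divisors, $L$ is semi-ample$/U$.

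The hard part will be the simultaneous rational perturbation in the second paragraph: approximating $(\Delta,\Mm)$ by $\Qq$-points of the glc polytope $P$, splitting $A$ into matching ample $\Qq$-Cartier summands, and still maintaining that every $L_i$ is nef. The nef cone is closed but not rational polyhedral in general, so this requires carefully exploiting the openness of the ample cone against the rational polytopal structure of the glc region in tandem, rather than perturbing each ingredient independently.
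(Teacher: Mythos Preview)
Your proposal is correct and follows essentially the same route as the paper: reduce to the $\Qq$-coefficient case via a Shokurov-type rational polytope decomposition (the paper cites \cite[Proposition 3.16]{HL22} and \cite[Lemma 5.3]{HLS19} for exactly this step, using that ampleness is open), and then invoke Theorem~\ref{thm: main thm 1.1}. Your last paragraph correctly isolates the only nontrivial point---simultaneously keeping the $L_i$ nef while making the data rational---which is precisely what those cited polytope results supply.
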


As an easy corollary, we have:
\begin{thm}\label{thm: semi-ampleness with good B+(A)}
Let $(X,\Delta+A,\Mm)/U$ be a glc pair such that 
\begin{itemize}
    \item $\mathbf{B}_+(A/U)$ contains no glc center of $(X,\Delta+A,\Mm)$.
    \item $K_X+\Delta+A+\Mm_X$ is nef over $U$.
\end{itemize}
Then $K_X+\Delta+A+\Mm_X$ is semi-ample over $U$.
\end{thm}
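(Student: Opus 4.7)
The plan is to reduce Theorem~\ref{thm: semi-ampleness with good B+(A)} to Theorem~\ref{thm: K_X+B+M+A is semi-ample for A ample} by a standard perturbation that converts part of $A$ into a small ample divisor at the cost of slightly enlarging the boundary.

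First, fix an ample$/U$ $\Rr$-divisor $H$ on $X$. Since $\mathbf{B}_+(A/U)$ agrees with the relative stable base locus of $A-\delta H$ for all sufficiently small $\delta>0$, and by hypothesis contains none of the (finitely many) glc centers of $(X,\Delta+A,\Mm)$, a standard linear-system argument produces $0<\delta\ll 1$ and an effective $\Rr$-divisor $F$ with
\[
A \sim_{\Rr,U} \delta H + F,
\]
such that no glc center of $(X,\Delta+A,\Mm)$ is contained in $\Supp(F)$.

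Next, I claim that the perturbed pair $(X,\Delta+(1-t)A+tF,\Mm)$ remains glc for all sufficiently small $t>0$. To verify this, fix a log resolution $\pi\colon Y\to X$ of $(X,\Delta+A+F,\Mm)$ on which $\Mm$ descends. Every prime divisor $E\subset Y$ whose log discrepancy with respect to $(X,\Delta+A,\Mm)$ vanishes has $\pi$-image a glc center of $(X,\Delta+A,\Mm)$, and by the choice of $F$ this image is not contained in $\Supp(F)$; hence $\mult_E(\pi^*F)=0$. Since only finitely many prime divisors of $Y$ need to be controlled, $(X,\Delta+A+tF,\Mm)$ is glc for $0<t\ll 1$, and then so is $(X,\Delta+(1-t)A+tF,\Mm)$ because $(1-t)A+tF\le A+tF$ when $A\ge 0$.

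Finally, writing $\Delta':=\Delta+(1-t)A+tF$ and using $tF\sim_{\Rr,U}tA-t\delta H$, one obtains
\[
K_X+\Delta'+\Mm_X+t\delta H \;\sim_{\Rr,U}\; K_X+\Delta+A+\Mm_X,
\]
which is nef$/U$ by hypothesis. As $t\delta H$ is ample$/U$ and $(X,\Delta',\Mm)$ is glc, Theorem~\ref{thm: K_X+B+M+A is semi-ample for A ample} yields semi-ampleness$/U$ of $K_X+\Delta'+\Mm_X+t\delta H$, and therefore of $K_X+\Delta+A+\Mm_X$. The argument is a short reduction with no genuinely hard step; the only item worth double-checking is the preservation of the glc property in the perturbation step, which relies on the same log-resolution detection of glc places as in the classical lc case.
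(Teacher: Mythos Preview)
Your proof is correct and follows essentially the same route as the paper's own argument: decompose $A\sim_{\Rr,U}(\text{ample})+(\text{effective avoiding glc centers})$ via the definition of $\mathbf{B}_+(A/U)$, perturb the boundary by trading a small multiple of $A$ for that effective part, and then invoke Theorem~\ref{thm: K_X+B+M+A is semi-ample for A ample}. The paper writes $A\sim_{\Rr,U}H+E$ and passes to $(X,\Delta+(1-\epsilon)A+\epsilon E,\Mm)$ with ample perturbation $\epsilon H$, which is exactly your $(X,\Delta',\Mm)$ with $t\delta H$; your log-resolution justification of the glc condition is a bit more explicit than the paper's, but the content is identical.
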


By looking at the gluing relations more carefully, we can actually get a stronger result, which is the g-pair analogue of the base point free theorem for lc pairs.

\begin{thm}[Base point free theorem for glc pairs]\label{thm: bpf thm for g-pairs}
Let $(X,\Delta,\Mm)/U$ be a glc $\Qq$-pair, and $L$ a nef$/U$ Cartier divisor such that $aL-(K_X+\Delta+\Mm_X)$ is nef and log big$/U$ with respect to $(X,\Delta,\Mm)$ for some real positive number $a$. Then $\Oo_X(mL)$ is globally generated over $U$ for all $m\gg 0$. 
\end{thm}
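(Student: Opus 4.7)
The plan is to combine Theorem~\ref{thm: main thm 1.1} (semi-ampleness) with a refinement of its underlying gluing construction that tracks the Cartier structure of $L$, not merely its $\Qq$-linear class.

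\textbf{Step 1 (Reduce to semi-ampleness).} Fix any integer $m \geq a$. Then
\[
mL - (K_X+\Delta+\Mm_X) \;=\; (m-a)L + \bigl(aL - (K_X+\Delta+\Mm_X)\bigr)
\]
is a sum of a nef divisor and a nef-and-log-big divisor, hence is itself nef and log big$/U$ with respect to $(X,\Delta,\Mm)$. Theorem~\ref{thm: main thm 1.1} then gives that $mL$ is semi-ample over $U$, and consequently so is $L$. Let $\phi \colon X \to Y$ be the contraction over $U$ with $\phi_*\Oo_X = \Oo_Y$ associated to the section ring of $L$; then $L \sim_\Qq \phi^* A_Y$ for some ample $\Qq$-divisor $A_Y$ on $Y/U$.

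\textbf{Step 2 (Cartier descent of $L$).} The heart of the argument is to upgrade the $\Qq$-linear identity to $L = \phi^* L_Y$ for an honest Cartier divisor $L_Y$ on $Y$. This does not follow formally from Step~1, since a Cartier semi-ample divisor can have nontrivial torsion along the fibers of its ample model (e.g.\ a nontrivial $2$-torsion line bundle on an elliptic curve). I would revisit the proof of Theorem~\ref{thm: main thm 1.1}: there $Y$ is built by applying Koll\'ar's gluing theory to the glc stratification of \cite[\S4]{LX22b}, using adjunction to glc centers via the generalized canonical bundle formula \cite{Fil20, HL21b, JLX22} and the $\Pp^1$-link propagation of \cite{FS20, Bir20}. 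One proceeds by induction on $\dim X$: for any minimal glc center $V$ with adjoint glc structure $(V, \Delta_V, \Mm^V)/U$, the restriction $L|_V$ is Cartier and satisfies the same nef + log-big hypothesis on $V$, so by the inductive hypothesis $\Oo_V(mL|_V)$ is globally generated over $U$ for all $m \gg 0$. This Cartier-level base point freeness on lower strata makes the finite equivalence relation producing $Y$ compatible with the line bundle $\Oo_X(L)$ itself, and not merely with $\Oo_X(nL)$ for some integer $n$, yielding the desired descent $L = \phi^* L_Y$.

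\textbf{Step 3 (Conclude).} Since $L_Y \sim_\Qq A_Y$ is ample over $U$, the Cartier divisor $L_Y$ is itself ample over $U$, so $\Oo_Y(mL_Y)$ is globally generated over $U$ for all $m \gg 0$; pulling back, $\Oo_X(mL) = \phi^*\Oo_Y(mL_Y)$ is globally generated over $U$ for all $m \gg 0$.

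The main obstacle is Step~2: we must execute Koll\'ar's gluing at the level of the line bundle $\Oo_X(L)$ rather than at the numerical or $\Qq$-linear level. This is the content of ``by looking at the gluing relations more carefully'' from the introduction, and it is where the log-bigness hypothesis on $aL - (K_X+\Delta+\Mm_X)$ is crucially used, through the induction, to force Cartier-level compatibility of the gluing on the lower-dimensional glc centers.
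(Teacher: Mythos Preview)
Your overall strategy---prove semi-ampleness first, then sharpen the gluing to track the Cartier structure of $L$---is the paper's strategy. The gap is in Step~2, and it is not merely cosmetic.

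You write that ``$Y$ is built by applying Koll\'ar's gluing theory'' and that the finite equivalence relation from that proof produces $Y$. This misreads the proof of Theorem~\ref{thm: main thm 1.1}. The gluing there never touches the ambient $X$ or its ample model $Y$: one first uses Lemma~\ref{lem: perturb glc pair to nlc pair} together with \cite[Theorem~13.1]{Fuj11} to reduce to semi-ampleness of $L$ restricted to $B(X)=\Ngklt(X,\Delta,\Mm)$, and the gluing then manufactures a target $Z$ for $g\colon B(X)\to Z$ together with an ample $H_Z$ satisfying $g^*H_Z=mL|_{B(X)}$ for some $m$. Nothing in that construction descends $L$ along $\phi\colon X\to Y$; over the open gklt locus of $X$ your Step~2 supplies no argument at all, so the torsion-on-fibres obstruction you yourself flag is left untouched there.

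The paper's proof of Theorem~\ref{thm: bpf thm for g-pairs} simply keeps the Fujino reduction in place: it suffices to show that $\Oo_V(mL)$ is globally generated over $U$ for all $m\gg0$, where $V=\Ngklt(X,\Delta,\Mm)$. Now the induction hypothesis, applied to the normalized glc centers $V_i$, gives $L|_{V_i}=g_i^*K_i$ for honest ample \emph{line bundles} $K_i$ on $Z_i$ (not just for a multiple of $L$). Rerunning the total-space gluing of Theorem~\ref{thm: semi-ampleness for glc crepant log structure with log bigness} with the $L_i$ themselves shows that the $K_i$ glue to an ample line bundle $H_Z$ on $Z$ with $g^*H_Z=L|_V$; then $mH_Z$ is very ample for all $m\gg0$ by Castelnuovo--Mumford regularity. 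So the missing piece is not a new idea but the reduction to the non-gklt locus---once you insert it, your Steps~2 and~3 collapse into exactly the paper's argument, and the unsupported descent along $\phi$ is no longer needed.
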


The immediate application is the Contraction theorem for g-pairs, which fulfill the last part of \cite[Theorem 1.3]{HL21a} when $\Mm_X$ is not necessarily $\Rr$-Cartier:

\begin{thm}\label{thm: Contraction thm for g-pairs}
Let $(X,\Delta,\Mm)/U$ be a glc pair and $R$ be a $(K_X+\Delta+\Mm_X)$-negative extremal ray in
$\overline{\NE}(X/U)$. Then $R$ is a rational extremal ray. In particular, there exists a projective
morphism $\cont_R:X\to Y$ over $U$ satisfying the following.
\begin{itemize}
    \item For any integral curve $C$ such that the image of $C$ in $U$ is a point, then $\cont_R(C)$ is a point if and only if $[C]\in R$.
    \item $\Oo_Y=(\cont_R)_*\Oo_X$. In other words, $\cont_R$ is a contraction.
    \item Let $L$ be a line bundle on $X$ such that $L\cdot R=0$, then there exists a line bundle $L_Y$ on $Y$ such that $L=\cont_R^*L_Y$ .
\end{itemize}
\end{thm}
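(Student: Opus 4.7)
The plan is to follow the classical Kawamata--Shokurov strategy: first support $R$ by a nef Cartier divisor $H$, then use the Base Point Free theorem (Theorem \ref{thm: bpf thm for g-pairs}) to make $H$ semi-ample, and finally take $\cont_R$ to be the Stein factorization of the resulting morphism.

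First I would invoke the Cone theorem for glc pairs from \cite{HL21a}, which already gives that $R$ is spanned by a rational curve of bounded length, hence is a rational extremal ray, and which yields the polyhedral picture near $R$ needed to produce a nef Cartier divisor $H$ on $X$ with $H\cdot R = 0$ and $H$ strictly positive on $\overline{\NE}(X/U)\setminus R$. Because $R$ is $(K_X+\Delta+\Mm_X)$-negative and $H$ is strictly positive off $R$, for $a \gg 0$ the divisor $aH - (K_X+\Delta+\Mm_X)$ is ample$/U$, and in particular nef and log big$/U$ with respect to $(X,\Delta,\Mm)$. Theorem \ref{thm: bpf thm for g-pairs} then applies: $\Oo_X(mH)$ is globally generated$/U$ for all $m \gg 0$, and I define $\cont_R : X \to Y$ as the Stein factorization of the associated morphism $X \to Z$ over $U$.

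The first bullet then follows from the supporting property: an integral curve $C$ lying in a fiber of $X\to U$ satisfies $H\cdot C = 0$ iff $[C]\in R$, and the former is equivalent to $C$ being contracted by $\cont_R$. The second bullet is the defining property of the Stein factorization. For the third bullet, given a line bundle $L$ with $L\cdot R = 0$, I would run the above argument with $H$ replaced first by $L + mH$ and then by $(m+1)H$, for $m \gg 0$: each is nef$/U$ (by strict positivity of $H$ off $R$, which absorbs $L$ once $m$ is large), each supports $R$ (since $L\cdot R = 0 = H\cdot R$), and the appropriate positive combination with $-(K_X+\Delta+\Mm_X)$ remains ample. Theorem \ref{thm: bpf thm for g-pairs} then shows both are semi-ample$/U$, and the two associated morphisms must each coincide with $\cont_R$. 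The difference $L$ descends to a line bundle $L_Y$ on $Y$ with $L = \cont_R^* L_Y$.

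The main obstacle is producing the supporting nef Cartier divisor $H$ intrinsically, in the non-$\Qq$-factorial, non-$\Rr$-Cartier glc setting: the approach of \cite[Theorem 4.1]{HL21a} that replaces $(X,\Delta,\Mm)$ by an auxiliary lc pair is unavailable here (as highlighted in \cite[Example 2.1]{LX22b} where glc pairs genuinely differ from lc pairs once $\Qq$-factoriality fails), so one must work directly with the polyhedral structure on $\overline{\NE}(X/U)$ supplied by the glc Cone theorem. Once $H$ is in hand, Theorem \ref{thm: bpf thm for g-pairs}, whose proof via the glc stratification and Koll\'ar's gluing technique is the heart of this paper, carries essentially all of the remaining weight.
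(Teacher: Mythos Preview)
Your argument is correct and matches the paper's approach almost exactly: produce a nef supporting divisor $H$ via the Cone theorem, apply the base-point-free machinery to obtain the contraction, then descend $L$ by running the same machinery on $L$ plus a multiple of $H$. The paper's only variations are that it takes $H=K_X+\Delta+\Mm_X+A$ for an ample $\Qq$-divisor $A$ (so $H$ is merely $\Qq$-Cartier, and Theorem~\ref{thm: semi-ampleness for glc crepant log structure with log bigness} is invoked for the contraction rather than Theorem~\ref{thm: bpf thm for g-pairs}), and for the descent of $L$ it works over the target $Z$ rather than over $U$, which is marginally cleaner since $L$ and $H$ are already numerically trivial there.

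One correction to your final paragraph: producing the supporting $H$ is \emph{not} the obstacle. The Cone theorem of \cite[Theorem~1.3]{HL21a} is established for arbitrary glc pairs (without any assumption that $\Mm_X$ be $\Rr$-Cartier) and already yields the rationality of $R$ and the locally rational polyhedral structure of $\overline{\NE}(X/U)$ needed to find $H$; the auxiliary-pair trick of \cite[Theorem~4.1]{HL21a} was used only for the \emph{contraction} step. What was genuinely missing in the non-$\Rr$-Cartier case is precisely the semi-ampleness of $H$, and that is what Theorem~\ref{thm: bpf thm for g-pairs} supplies---as you correctly say at the very end.
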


The author has been told by Jihao Liu that Theorem \ref{thm: Contraction thm for g-pairs} along with \cite[Theorem 1.1]{LX22b} would allow one to run MMP for glc pairs in the non-$\Qq$-factorial setting:

\begin{thm}\label{thm: mmp for glc pairs}
Let $(X,\Delta,\Mm)/U$ be a glc pair, then we can run a $(K_X+\Delta+\Mm_X)$-MMP over $U$.  
\end{thm}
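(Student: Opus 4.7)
The plan is to iterate three ingredients already at hand: the Cone theorem for glc pairs \cite[Theorem 1.3]{HL21a}, the Contraction theorem for glc pairs just established (Theorem \ref{thm: Contraction thm for g-pairs}, which crucially does not assume $\Mm_X$ is $\Rr$-Cartier), and the existence of glc flips \cite[Theorem 1.1]{LX22b}. Together these are designed to produce each individual step of a $(K_X+\Delta+\Mm_X)$-MMP over $U$.

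If $K_X+\Delta+\Mm_X$ is already nef$/U$ we stop. Otherwise, the Cone theorem supplies a $(K_X+\Delta+\Mm_X)$-negative extremal ray $R \subset \overline{\NE}(X/U)$, and Theorem \ref{thm: Contraction thm for g-pairs} supplies the extremal contraction $\cont_R\colon X \to Y$ over $U$. As usual I would split into three cases: (i) $\dim Y < \dim X$, which gives a Mori fiber space and we stop; (ii) $\cont_R$ is birational and its exceptional locus contains a prime divisor (divisorial step), in which case we replace $X$ by $Y$ with $\Delta_Y := (\cont_R)_*\Delta$ and the same b-divisor $\Mm$; (iii) $\cont_R$ is small, in which case we invoke \cite[Theorem 1.1]{LX22b} to produce the flip $X \dashrightarrow X^+$ and replace $X$ by $X^+$.

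To iterate, the output of each step must again be a glc pair. For case (iii) this is part of the conclusion of \cite[Theorem 1.1]{LX22b}. For case (ii), what must be verified is that $K_Y+\Delta_Y+\Mm_Y$ is $\Rr$-Cartier and that all discrepancies remain $\ge -1$. The $\Rr$-Cartierness follows from the third clause of Theorem \ref{thm: Contraction thm for g-pairs}: choosing a Cartier divisor $H/U$ and $t \in \Qq_{>0}$ so that $H + t(K_X+\Delta+\Mm_X)$ is numerically trivial on $R$, this combination descends along $\cont_R$ to a Cartier divisor on $Y$, and solving for $K_Y+\Delta_Y+\Mm_Y$ yields the required $\Rr$-Cartier structure. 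Preservation of the glc condition then follows from the usual negativity-of-contraction argument applied to $\cont_R$.

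The main obstacle I anticipate is precisely this divisorial-step descent in the absence of $\Qq$-factoriality: without the refined contraction theorem one cannot even make sense of the pushforward of $K_X+\Delta+\Mm_X$ as an $\Rr$-Cartier adjoint, and the naive MMP breaks down. This is exactly what forced \cite[Theorem 1.3]{HL21a} to assume $\Mm_X$ is $\Rr$-Cartier; removing that hypothesis via Theorem \ref{thm: Contraction thm for g-pairs} is the essential new input that makes the iteration go through in full generality, so the MMP procedure can be continued at each step as claimed.
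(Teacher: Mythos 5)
There is a genuine gap in your divisorial case (ii), and it is precisely the point the paper is designed to avoid. Without $\Qq$-factoriality it is simply not true in general that $K_Y+\Delta_Y+\Mm_Y$ is $\Rr$-Cartier after a divisorial contraction: if it were, then writing $K_X+\Delta+\Mm_X=\cont_R^*(K_Y+\Delta_Y+\Mm_Y)+aE$ with $a\neq 0$ (the adjoint is negative on $R$, so it is not a pullback) would force the contracted prime divisor $E$ to be $\Rr$-Cartier, which is exactly what may fail on a non-$\Qq$-factorial $X$. Your proposed derivation breaks down at the "solving for" step: granting that $H+t(K_X+\Delta+\Mm_X)$ descends to an $\Rr$-Cartier divisor $D_Y$ (note also that the third clause of Theorem \ref{thm: Contraction thm for g-pairs} is stated for line bundles, so an $\Rr$-Cartier version needs an extra argument), pushing forward gives $K_Y+\Delta_Y+\Mm_Y=\frac{1}{t}\left(D_Y-(\cont_R)_*H\right)$, and $(\cont_R)_*H$ is not known to be $\Rr$-Cartier since $H\cdot R\neq 0$ and $Y$ is not $\Qq$-factorial. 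Consequently the negativity argument you invoke to preserve glc-ness cannot even be set up, and the iteration stalls at the first divisorial step.

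The paper's proof avoids the divisorial/small dichotomy altogether: for any birational $f\colon X\to Z$ contracting $R$, it replaces $X$ by the relative canonical model
$$
X':=\proj_{Z}\bigoplus_{m\ge0}f_*\Oo_{X}\bigl(m(K_X+\Delta+\Mm_X)\bigr),
$$
whose existence is exactly \cite[Theorem 1.1]{LX22b} and which has $K_{X'}+\Delta'+\Mm_{X'}$ $\Rr$-Cartier by construction; when the adjoint happens to descend this recovers $Z$ itself, and in the small case it is the flip (this is the standard non-$\Qq$-factorial formulation, cf. \cite[Section 4.9]{Fuj17}). Your case (iii) coincides with this, but your case (ii) should be folded into the same mechanism rather than argued via descent; as written, the divisorial step is the missing (and unfixable in the form you state it) ingredient.
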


This turns out to be useful for proving some expected semi-ampleness results since log bigness is not preserved when pulling back to $\Qq$-factorial gdlt models. Actually N. Tsakanikas and I will pursue the following statement in a forthcoming paper:

\begin{thm}\label{thm: gmm with A in the boundary}
Let $(X,\Delta,\Mm)/U$ be a glc pair and $A$ be an ample$/U$ $\Rr$-divisor such that $(X,\Delta+A,\Mm)$ is also glc. Then we can run a $(K_X+\Delta+\Mm_X+A)$-MMP which terminates with a Mori fiber space or a good minimal model (not necessarily $\Qq$-factorial).
\end{thm}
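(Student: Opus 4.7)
The plan is to combine running the MMP (enabled by Theorem \ref{thm: mmp for glc pairs} via the new Contraction theorem \ref{thm: Contraction thm for g-pairs}) with a termination step exploiting the ampleness of $A$, and to upgrade any resulting minimal model to a good one by invoking Theorem \ref{thm: semi-ampleness with good B+(A)}. I will first run a $(K_X+\Delta+A+\Mm_X)$-MMP over $U$ with scaling of a sufficiently ample$/U$ divisor $H$, producing a birational map $\phi: X \dashrightarrow X'$ composed of divisorial contractions and flips. Because the target statement permits the minimal model to be non-$\Qq$-factorial, this MMP may be run directly on $X$ rather than on an auxiliary $\Qq$-factorial model.

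For termination, I exploit that $A$ is ample: writing $A \sim_{\Rr,U} A_0 + \epsilon H'$ with $A_0$ ample$/U$ and $H'$ a very general ample$/U$ divisor for $0 < \epsilon \ll 1$, I replace $(X,\Delta+A,\Mm)$ by the $\Rr$-equivalent pair $(X,\Delta+A_0+\epsilon H',\Mm)$ whose boundary now contains an ample divisor that can be wiggled. Termination of the MMP with scaling in this setting is available after passing to a $\Qq$-factorial gdlt modification and invoking the results of \cite{HL22,LT22,Has22,LX22a}, and descends back to $X$. If the MMP terminates with a Mori fiber space we are done. Otherwise we obtain $(X',\Delta'+A',\Mm)/U$ with $K_{X'}+\Delta'+A'+\Mm_{X'}$ nef$/U$, where $A' = \phi_*A$ remains big on $X'$ since bigness is preserved through the steps of the MMP. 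The augmented base locus $\mathbf{B}_+(A'/U)$ is contained in the union of the strict transforms of divisors contracted by $\phi$ together with the flipped loci; a standard analysis of glc strata under divisorial contractions and flips shows that no glc center of $(X',\Delta'+A',\Mm)$ lies in this locus, using that $\mathbf{B}_+(A/U) = \emptyset$ on the original $X$. Theorem \ref{thm: semi-ampleness with good B+(A)} then yields semi-ampleness of $K_{X'}+\Delta'+A'+\Mm_{X'}$, and $X'$ is the desired good minimal model.

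The main obstacle is termination in the non-$\Qq$-factorial setting: ensuring that the MMP on $X$ itself (rather than on a $\Qq$-factorial modification) can be followed to its end requires the Contraction theorem \ref{thm: Contraction thm for g-pairs} in its new generality to cut out each extremal ray and to relate the contractions upstairs and downstairs. A secondary difficulty is rigorously confirming that $\mathbf{B}_+(A'/U)$ avoids every glc center of the limit pair; this demands careful tracking of how glc strata transform through the sequence of divisorial contractions and flips, together with the fact that $A$ is initially ample on $X$.
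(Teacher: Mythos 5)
Your overall skeleton matches the intended argument (and the route sketched in the paper's source): run the $(K_X+\Delta+A+\Mm_X)$-MMP with scaling directly on the non-$\Qq$-factorial $X$, using Theorem \ref{thm: Contraction thm for g-pairs} to contract each extremal ray and \cite[Theorem 1.1]{LX22b} to produce the next model as a canonical model over the contraction, inductively preserve the condition that $\mathbf{B}_+$ of the transformed ample part contains no glc center, and finish with Theorem \ref{thm: semi-ampleness with good B+(A)}. Note, however, that the paper does not actually prove this statement: it is explicitly deferred to a forthcoming paper with Tsakanikas, and the hard point that remains open there is precisely the one you dispatch in a sentence, namely termination.

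That termination step is the genuine gap. Your claim that termination ``is available after passing to a $\Qq$-factorial gdlt modification and invoking \cite{HL22,LT22,Has22,LX22a}, and descends back to $X$'' does not work as stated: on a gdlt modification $f\colon Y\to X$ the divisor $f^*A$ is only nef and big, $\mathbf{B}_+(f^*A/U)$ contains $\Exc(f)$, and every $f$-exceptional divisor is a glc center of the pulled-back pair, so the hypotheses of the cited results (boundary containing an ample divisor, or $\mathbf{B}_+$ avoiding glc centers) are not available upstairs --- this is exactly the obstruction the paper flags when it says log bigness is not preserved under pullback to $\Qq$-factorial gdlt models, and it is the reason the MMP must be run downstairs via Theorem \ref{thm: Contraction thm for g-pairs} in the first place. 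Moreover, steps of an MMP on $Y$ need not induce steps of an MMP on $X$, so even granting termination upstairs nothing ``descends.'' A second, smaller gap: your end-of-MMP assertion that $\mathbf{B}_+(A'/U)$ lies in the non-isomorphism locus and misses every glc center of the limit pair is stated but not argued, and as phrased (``strict transforms of contracted divisors'') it is not even well posed; the workable mechanism is an induction carried out at each step, using that the new model is the canonical model over $Z_i$ and the induced morphism is small, that the step is an isomorphism at the generic points of the glc centers of the new pair (negativity, \cite[Lemma 3.38]{KM98}), and pulling back an ample divisor from $Z_i$ to keep $\mathbf{B}_+(A_{i+1})$ away from glc centers. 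Without a genuine termination argument the proposal proves existence of the MMP with scaling and the goodness of a minimal model \emph{if} the process stops, but not Theorem \ref{thm: gmm with A in the boundary} itself.
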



\vspace{.5em}

Since the proof of the main theorem relies on showing certain finiteness of relations, we will inevitably run into many technical issues, so we would like to give a sketch here to explain the core ideas in the proof.

\vspace{.5em}

\noindent\textit{ Sketch of the proof of Theorem \ref{thm: main thm 1.1}:} By perturbing the generalized pair and applying Fujino's technique (cf. \cite{Fuj11}), we can easily reduce the question to proving that $L|_V$ is semi-ample, where $V=\Ngklt(X,\Delta,\Mm)$ is the non-gklt locus of $(X,\Delta,\Mm)$ with the reduced scheme structure. The subtle thing here is that the structure of $V$ is somehow complicated (eg. $V$ may not be equi-dimensional or irreducible) so it is usually really hard to tell when a line bundle on $V$ should be semi-ample. 

However, $V$ is actually semi-normal and has a good stratification structure (glc stratification) coming from the glc centers (cf. \cite{LX22b}), and if we consider some nicely chosen stratified morphsim, for example, the normalization $\pi:V^n\to V$, then we can do subadjunction to $V^n$ and then by induction on the dimension we know that $L|_{V^n}$ is semi-ample. Notice that $V^n=\coprod V_i$ is a disjoint union of irreducible normal varieties, so for each $V_i$, $L|_{V_i}$ defines a contraction 
$$
g_i:V_i\to Z_i
$$
with a so called glc crepant log structure (see Definition \ref{defn: glc crepant log structure}), which induces a glc stratification on $Z_i$. In order to show that $L|_V$ is semi-ample, we must first find the correct candidate morphism $g:V\to Z$ that will be defined by $L|_V$ with the information coming from the $g_i$ and $\pi$. More precisely, we must consider the relation
$$
\{g_i(x)\sim g_j(y)~|~x\in V_i, y\in V_j, \pi(x)=\pi(y)\}
$$
between $Z_i$ and $Z_j$ ($i,j$ need not to be distinct). After some extra effort, we can give a nice interpretation on the induced relation between the $Z_i$'s by relating it with some group actions on the strata induced by the stratification. Fortunately for us, we have a powerful gluing theory introduced by Koll\'ar, with the help of which we only need to show that the above relation generated by $g_i$ and $\pi$ is finite in some sense (\cite[Theorem 9.21]{Kol13}). Moreover, we only need to check that this holds on each glc center $Z_{i,\gamma}\subset Z_i$. This is essentially equivalent to showing that the stablizer group stab$(Z_{i,\gamma})$ is finite.

The relation given by $\pi$ is always finite since $\pi$ is a finite morphism, but the contractions $g_i$ will create extra gluing information. A more careful computation shows that the extra relations essentially come from the different minimal glc centers on $V_i$ that dominate the same glc center on $Z_i$. For simplicity, we consider the case that there is only one $g_i:V_i\to Z_i$ and $g_i|_{V_{i,\alpha}}$ is also a contraction for any glc center $V_{i,\alpha}\subset V_i$. Let $V_{i,\alpha_1}$ and $V_{i,\alpha_2}$ be two different minimal glc center that dominate $Z_i$. Assume that the only gluing relation coming from $\pi$ is given by an isomorphism
$$
\tau_{12}:V_{i,\alpha_1}\to V_{i,\alpha_2}.
$$  
Then we can see that $\tau_{12}$ does not generate any automorphism in $V_{i,\alpha_1}$ or $V_{i,\alpha_2}$. However, $\tau_{12}$ induces an automorphism of $Z_i$, which may not be of finite order in general (see Example \ref{ex: relation is not finite in general} below). 

Nevertheless, the log bigness in our assumption makes the situation much better behaved (see Theorem \ref{thm: unique minimal glc center for log big glc crepant structure}). We actually show that for any glc center $Z_{i,\gamma}\subset Z_i$, there is a unique minimal glc center $V_{i,\alpha}$ that controls all the relations concerning $Z_{i,\gamma}$. In particular, any automorphism of $Z_{i,\gamma}$ coming from the relations will lift to an automorphism of $V_{i,\alpha}$ which in turn is induced by the $\pi$. Therefore the finiteness of relations between the $V_i$ will ensure the finiteness of relations between the $Z_i$ and we can obtain the geometric quotient $Z$ as we desired.

Applying the same philosophy to the total space of the line bundle $mL|_V$ over $V$ for sufficiently divisible $m$, we will be able to find a line bundle $H$ on $Z$ such that $g^*H=mL|_V$. We can easily show that $H$ is ample, hence $L|_V$ is semi-ample and we are done.

\vspace{1em}
The following example shows that the uniqueness of minimal glc centers is really necessary when using gluing theory to find the desired $Z$ and $H$ that correspond to the semi-ample $L$.

\begin{ex}[{\cite[Example 4.15]{LX22b}}]\label{ex: relation is not finite in general}
Let $\lambda\in\Cc^*$ and consider $\Pp^1\times \mathbb{A}^1$, which can be regarded as the total space of a trivial line bundle over $\Pp^1$. We define $\phi_\lambda: \{0\}\times\mathbb{A}^1\simeq\{\infty\}\times\mathbb{A}^1$ by $(0,t)\mapsto(\infty,\lambda t)$ and glue $\{0\}\times\mathbb{A}^1$ and $\{\infty\}\times\mathbb{A}^1$ together using $\phi_\lambda$ to get a demi-normal variety $M$ with projection $p:M\to C$, where $C$ is a nodal cubic. Then $M$ is the total space of a line bundle $N$ on $C$. Moreover, $N\in\Pic^0(C)\simeq\mathbb{G}_m=\Cc^*$ and can be canonically identified with $\lambda\in\Cc^*$. 

\begin{enumerate}
    \item Let $W:=\PP_C(\Oo_C\oplus N)$ be a $\Pp^1$-bundle over $C$, and let $C'\subset W$ be the section at infinity, which belongs to $|\Oo_W(1)|$. Then the normalization $W^n=\Pp^1\times\Pp^1$, and the extended isomorphism
    $$
    \tilde{\phi}_\lambda: \{0\}\times\Pp^1\simeq\{\infty\}\times\Pp^1~,~(0,[x,y])\mapsto(0,[\lambda x,y])
    $$
    gives the gluing relation of $\pi_W: W^n\to W$.
    
    Notice that $K_W$ is Cartier since $W$ is a locally complete intersection. Let $L:=K_X+3C'$, then we see that 
    $$
    \pi^*L=K_{W^n}+\{0\}\times\Pp^1+\{\infty\}\times\Pp^1+3\pi^*C'\sim p_2^*(\{\infty\})
    $$ is base point free and defines the second projection $p_2:W^n\to Y\simeq \Pp^1$. Since $V_1=\{0\}\times\Pp^1$ and $V_2=\{\infty\}\times\Pp^1$ both dominate $Y$ under $p_2$, $\tilde{\phi}_\lambda$ induces an automorphism of $V$:
    $$
    \tilde{\sigma}_\lambda:[x,y]\mapsto[\lambda x,y]
    $$
    Thus the relation generated by $\pi$ and $p_2$ is given by
$$\{[x,y]\sim[x',y']|[x',y']=[x,\lambda^ly] \text{ for some $l\in\Zz$}\}$$
and is finite if and only if $\lambda$ is a root of unity.
\item Let $\pi_C:\Pp^1\to C$ be the normalization. Then $\pi_C^*(N)\simeq\Oo_{\Pp^1}$ and it defines $g^n:\Pp^1\to\Spec~\Cc$, then the gluing relation $\{0\}\sim\{\infty\}$ from $\pi_C$ gives no extra relation under $g^n$, and so we get the morphism: $$
g: C\to\Spec~\Cc
$$ 

However, if we consider the total space $M$ then $\pi_M:\Pp^1\times\mathbb{A}^1\to M$ is the normalization. Notice that $\Pp^1\times\mathbb{A}^1$ is also the total space of the trivial line bundle over $\Pp^1$, thus there is a canonical morphism 
$$
g^n_M: \Pp^1\times\mathbb{A}^1\to \mathbb{A}^1
$$
between total spaces of corresponding line bundles coming from $g^{n,*}\Oo_{\Spec\Cc}=\Oo_{\Pp^1}$. Then $\phi_\lambda$ induces a automorphism of $\mathbb{A}^1$:
$$
\sigma_\lambda: t\mapsto\lambda t   
$$
Thus the relation generated by $\pi_M$ and $g^n_M$ on $\mathbb{A}^1$ is given by 
$$
\{t\sim s ~|~t=\lambda^l s \text{ for some $l\in\Zz$}\}
$$
and is finite if and only if $\lambda$ is a root of unity. 

Even if $\lambda$ is a root of unity, for example, assume $\lambda$ generates $\mu_{n}\subset\mathbb{G}_m$, we have $\mathbb{A}^1/\mu_n\simeq\mathbb{A}^1$ and get $g_M: M\to \mathbb{A}^1$ as our desired morphism. However, if we look at the equivariant $\mathbb{G}_m$ action under $g_M$, we see the action on $\mathbb{A}^1\backslash\{0\}$ is the natural $\mathbb{G}_m$ action on $\mathbb{G}_m/\mu_n$. This corresponds to the fact that $N$ is not a pullback of a line bundle on $\Spec~\Cc$. Actually the $\mathbb{A}^1\backslash\{0\}$ with the above $\mathbb{G}_m$ action is call a Seifert bundle (\cite[Definition 9.50]{Kol13}) and it will become a line bundle if we replace $N$ with $nN$.

\end{enumerate}

\end{ex}

\noindent\textbf{Acknowledgement}. The author would like to thank his advisor Christopher D. Hacon for useful discussions and constant support. He would like to thank Jihao Liu for introducing the questions and giving useful comments. He would also like to thank Jingjun Han and Nikolaos Tsakanikas for giving useful comments. The author is partially supported by NSF research grants no: DMS-1801851, DMS-1952522 and by a grant from the Simons Foundation; Award Number: 256202.

\section{Preliminaries}\label{sec: preliminaries}

We adopt the standard notation and definitions in \cite{KM98,BCHM10} and will freely use them. We will first introduce the definition of generalized pairs by using $\bb$-divisors. Then we will recall the glc crepant log structures and its induced glc stratifications developed in \cite{LX22b}. 

\subsection{Generalized pairs}

We will follow the original definitions in \cite{BZ16} but will adopt the same notations as in \cite{HL21a}. Notice that there are some small differences with the definitions in \cite{HL21a}: in this paper, all generalized (sub)-pairs are assumed to be NQC.

\begin{defn}[$\bb$-divisors]\label{defn: b divisors} Let $X$ be a normal quasi-projective variety. We call $Y$ a \emph{birational model} over $X$ if there exists a projective birational morphism $Y\to X$. 

Let $X\dashrightarrow X'$ be a birational map. For any valuation $\nu$ over $X$, we define $\nu_{X'}$ to be the center of $\nu$ on $X'$. A \emph{$\bb$-divisor} $\Dd$ over $X$ is a formal sum $\Dd=\sum_{\nu} r_{\nu}\nu$ where $\nu$ are valuations over $X$ and $r_{\nu}\in\mathbb R$, such that $\nu_X$ is not a divisor except for finitely many $\nu$. If in addition, $r_{\nu}\in\Qq$ for every $\nu$, then $\Dd$ is called a \emph{$\Qq$-$\bb$-divisor} over $X$. The \emph{trace} of $\Dd$ on $X'$ is the $\Rr$-divisor
$$\Dd_{X'}:=\sum_{\nu_{i,X'}\text{ is a divisor}}r_i\nu_{i,X'}.$$
If $\Dd_{X'}$ is $\Rr$-Cartier and $\Dd_{Y}$ is the pullback of $\Dd_{X'}$ on $Y$ for any birational model $Y$ of $X'$, we say that $\Dd$ \emph{descends} to $X'$ and write $\Dd=\overline{\Dd_{X'}}$. 

Let $X\rightarrow U$ be a projective morphism and assume that $\Dd$ is a $\bb$-divisor over $X$ such that $\Dd$ descends to some birational model $Y$ over $X$. If $\Dd_Y$ is nef$/U$ (resp. semi-ample$/U$), then we say that $\Dd$ is \emph{nef}$/U$ (resp. semi-ample$/U$). If $\Dd_Y$ is a Cartier divisor, then we say that $\Dd$ is \emph{$\bb$-Cartier}. If $\Dd$ can be written as an $\Rr_{\geq 0}$-linear combination of nef$/U$ $\bb$-Cartier $\bb$-divisors, then we say that $\Dd$ is \emph{NQC}$/U$.
\end{defn}

\begin{defn}[Generalized pairs]\label{defn: g-pairs}
A \emph{generalized sub-pair} (\emph{g-sub-pair} for short) $(X,\Delta,\Mm)/U$ consists of a normal quasi-projective variety $X$ associated with a projective morphism $X\rightarrow U$, an $\Rr$-divisor $\Delta$ on $X$, and an NQC$/U$ $\bb$-divisor $\Mm$ over $X$, such that $K_X+\Delta+\Mm_X$ is $\Rr$-Cartier. If $\Delta$ is a $\Qq$-divisor and $\Mm$ is a $\Qq$-$\bb$-divisor, then we say that $(X,\Delta,\Mm)/U$ is a \emph{$\Qq$-g-sub-pair}.



A g-sub-pair (resp. $\Qq$-g-sub-pair) $(X,\Delta,\Mm)/U$ is called a \emph{g-pair} (resp. \emph{$\Qq$-g-pair}) if $\Delta\geq 0$. A sub-pair $(X,\Delta)$ is called a \emph{pair} if $\Delta\geq 0$.
\end{defn}

\begin{defn}[Singularities of generalized pairs]\label{defn: sing of g-pairs}
	Let $(X,\Delta,\Mm)/U$ be a g-(sub-)pair. For any prime divisor $E$ and $\mathbb R$-divisor $D$ on $X$, we define $\mult_{E}D$ to be the \emph{multiplicity} of $E$ along $D$.  Let $h:W\to X$
	be any log resolution of $(X,\Supp\Delta)$ such that $\Mm$ descends to $W$, and let
	$$K_W+\Delta_W+\Mm_W:=h^*(K_X+\Delta+\Mm_X).$$
	The \emph{log discrepancy} of a prime divisor $D$ on $W$ with respect to $(X,\Delta,\Mm)$ is $1-\mult_{D}\Delta_W$ and it is denoted by $a(D,X,\Delta,\Mm).$
	
	We say that $(X,\Delta,\Mm)$ is \emph{(sub-)glc} (resp. \emph{(sub-)gklt}) if $a(D,X,\Delta,\Mm)\ge0$ (resp. $>0$) for every log resolution $h: W\to X$ as above and every prime divisor $D$ on $W$. We say that $(X,\Delta,\Mm)$ is \emph{gdlt} if $(X,\Delta,\Mm)$ is glc, and there exists a closed subset $V\subset X$, such that
\begin{enumerate}
    \item $X\backslash V$ is smooth and $\Delta_{X\backslash V}$ is simple normal crossing, and
    \item for any prime divisor $E$ over $X$ such that $a(E,X,\Delta,\Mm)=0$, $\Center_XE\not\subset V$ and $\Center_XE\backslash V$ is an lc center of $(X\backslash V,B|_{X\backslash V})$.
\end{enumerate}
	    
	 Suppose that $(X,\Delta,\Mm)$ is sub-glc. A \emph{glc place} of $(X,\Delta,\Mm)$ is a prime divisor $E$ over $X$ such that $a(E,X,\Delta,\Mm)=0$. A \emph{glc center} of $(X,\Delta,\Mm)$ is the center of a glc place of $(X,\Delta,\Mm)$ on $X$. The \emph{non-gklt locus} $\Ngklt(X,\Delta,\Mm)$ of $(X,\Delta,\Mm)$ is the union of all glc centers of $(X,\Delta,\Mm)$.
	 
 If a $\Qq$-g-pair $(X,\Delta,\Mm)$ is glc, then we will call $(X,\Delta,\Mm)$ a glc $\Qq$-pair for short.
 
 We say that an $\Rr$-Cartier divisor $D$ is {\it log big} over $U$ with respect to $(X,\Delta,\Mm)$ if $D$ is big over $U$ and
for any generalized lc center $T$ of $(X,\Delta,\Mm)/U$ with the normalization $T^n\to T$, the
pullback $D|_{T^n}$ is big over $U$.
\end{defn}

The following lemma is important for applying \cite[Theorem 13.1]{Fuj11} when we try to prove semi-ampleness by inductions. We refer the readers to \cite[Section 7]{Fuj11} for the definitions of non-lc ideal and locus.

\begin{lem}\label{lem: perturb glc pair to nlc pair}
Let $(X,\Delta,\Mm)/U$ be a glc pair, and $L$ a nef $\Qq$-divisor such that $L-(K_X+\Delta+\Mm_X)$ is nef and big$/U$. Then there exists an effective $\Rr$-divisor $B$ such that $L-K_X-B$ is ample over $U$ and $\Nlc(X,B)=\Ngklt(X,\Delta,\Mm)$ (as subschemes). 
\end{lem}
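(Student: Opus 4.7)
The plan is a Fujino-style perturbation. I will construct $B$ with two ingredients: an effective divisor $M\ge 0$ on $X$ obtained from the NQC structure of $\Mm$ by trading a small ample twist against the nef part (via Bertini on a log resolution), and an effective divisor $F\ge 0$ on $X$ coming from a Kodaira decomposition of the nef-and-big difference $N:=L-(K_X+\Delta+\Mm_X)$, with $F$ chosen to meet every glc place of $(X,\Delta,\Mm)$ with strictly positive coefficient. The ample pieces from these two decompositions will combine into the ampleness of $L-K_X-B$, and the coefficients at glc places will exceed $1$ precisely as required.

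\textbf{Construction.} Take a log resolution $h\colon W\to X$ of $(X,\Supp\Delta)$ on which $\Mm$ descends, and write $K_W+\Delta_W+\Mm_W=h^*(K_X+\Delta+\Mm_X)$; the glc places of $(X,\Delta,\Mm)$ are exactly the prime divisors of $W$ appearing in $\Delta_W$ with coefficient $1$. Fix an ample $\Qq$-divisor $A$ on $X$ small enough that $N-2A$ remains big, and small $\epsilon,\delta\in(0,1)$. Since $\Mm$ is NQC, $\Mm_W+\epsilon h^*A$ is nef and big on $W$; Kodaira's lemma combined with a Bertini argument on $W$ yields an effective $\Qq$-divisor $G$ on $W$ with $G\sim_{\Qq,U}\Mm_W+\epsilon h^*A$ and $\Supp(G)$ in general position relative to $\Supp(\Delta_W)\cup\Exc(h)$. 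Setting $M:=h_*G\ge 0$, one has
\[
K_X+\Delta+M=h_*(K_W+\Delta_W+G)\sim_{\Qq,U}h_*h^*(K_X+\Delta+\Mm_X+\epsilon A)=K_X+\Delta+\Mm_X+\epsilon A
\]
as $\Rr$-Cartier divisors on $X$. Separately, since $N-(1+\epsilon)A$ is big, Kodaira's lemma produces an effective $F\ge 0$ on $X$ with $\delta F\sim_{\Rr,U}N-(1+\epsilon)A$, which we arrange non-generically so that $\Supp(F)$ contains every glc center of $(X,\Delta,\Mm)$. Put $B:=\Delta+M+\delta F$.

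\textbf{Verification.} Using the two equivalences above,
\[
L-K_X-B=L-(K_X+\Delta+M)-\delta F\sim_{\Rr,U}(N-\epsilon A)-(N-(1+\epsilon)A)=A,
\]
which is ample over $U$. For the non-lc identity, pull back to $W$: the coefficient of a prime divisor $D\subset W$ in $h^*(K_X+B)-K_W$ equals the coefficient of $D$ in $\Delta_W$ plus the contributions from $h^*M$ (captured by $G$ up to a principal divisor on $W$) and from $\delta h^*F$. The Bertini genericity of $G$ keeps the added contributions strictly below $1-(\text{coeff of }\Delta_W)$ on every non-glc component of $\Delta_W$, so those components remain lc; the forced positive contribution of $F$ strictly pushes the coefficients above $1$ on every glc place, yielding $\Nlc(X,B)=\Ngklt(X,\Delta,\Mm)$ as subschemes.

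\textbf{Main obstacle.} The delicate point is arranging the Kodaira decomposition of $N$ so that $F$ has strictly positive coefficient along \emph{every} glc place of $(X,\Delta,\Mm)$, including those that are $h$-exceptional over $X$ --- a generic Bertini choice of $F$ on $X$ would yield coefficient zero along $h$-exceptional divisors after pullback. The non-generic support is forced by taking $F$ inside (a small thickening of) the augmented base locus $\mathbf{B}_+(N/U)$, after absorbing more ample, via asymptotic multiplier ideal / base-locus arguments standard for nef and big divisors; this guarantees that $h^*F$ picks up positive multiplicity on every divisor of $W$ extracted over the locus. A secondary subtlety is the Weil-vs.-$\Rr$-Cartier distinction for $\Mm_X$ when $\Mm$ does not descend to $X$: the key identity $K_X+\Delta+M\sim_{\Rr,U}K_X+\Delta+\Mm_X+\epsilon A$ is formulated between $\Rr$-Cartier classes involving $K_X+\Delta+\Mm_X$ rather than $\Mm_X$ alone, which neatly sidesteps the need for $\Mm_X$ itself to be $\Rr$-Cartier on $X$.
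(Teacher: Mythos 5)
The decisive gap is your divisor $F$. You normalize it by $\delta F\sim_{\Rr,U}N-(1+\epsilon)A$, so $\delta F$ is an effective representative of the \emph{entire} big class $N-(1+\epsilon)A$: shrinking $\delta$ does nothing, since it merely rescales $F$ while the divisor $\delta F$ that actually enters $B=\Delta+M+\delta F$ stays in the same fixed class, whose effective representatives have uncontrolled (typically large) coefficients and admit no Bertini-type genericity along their base-locus part. Hence you have no control of the singularities of $(X,B)$ away from $\Ngklt(X,\Delta,\Mm)$: along components of $\Supp(\delta F)$ (or centers created by its non-generic position) lying outside $\Ngklt(X,\Delta,\Mm)$, log discrepancies can drop below $-1$, and $\Nlc(X,B)$ can be strictly larger than $\Ngklt(X,\Delta,\Mm)$. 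Your verification step asserts the added contributions stay "strictly below $1-(\text{coeff of }\Delta_W)$" on non-glc components, but the only smallness mechanism you invoke is $\delta$, which, as explained, does not act on $\delta F$ at all; and note that your ampleness computation comes out so cleanly ($L-K_X-B\sim_{\Rr,U}A$) precisely because all of $N-(1+\epsilon)A$ was dumped into $F$ --- that is exactly the move that destroys the singularity control. The paper avoids this by the standard Fujino-type decomposition: write the nef and big divisors as $(\text{ample})+\frac1n(\text{effective})$, put only the $\frac1n$-scaled effective pieces into $B$ (so they are genuinely small for $n\gg0$), and let the retained ample summands provide the ampleness of $L-K_X-B$; the requirement that every glc place --- including the exceptional ones --- receive a strictly positive coefficient is then arranged upstairs on the resolution, by perturbing the ample summand $A_n$ so that $\lf\Delta_Y\rf\subset\Supp E$, rather than downstairs on $X$.

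A secondary, repairable point concerns $G$: a class that is only nef and big over $U$ (such as $\Mm_W+\epsilon h^*A$) need not have an effective $\Qq$-representative whose support is in general position with respect to $\Supp(\Delta_W)\cup\Exc(h)$; Bertini applies only to an ample part. The correct form is $G=G'+tE_0$ with $G'$ a general member of the ample class $(1-t)(\Mm_W+\epsilon h^*A)+tH$ and $tE_0$ a fixed effective part with $t\ll1$, after which one checks that the small fixed part does not affect lc-ness away from the coefficient-one locus. Finally, once you require $\Ngklt(X,\Delta,\Mm)\subseteq\Supp F$, the worry in your "main obstacle" about $h$-exceptional glc places is already answered by the standard fact that the pullback of an effective $\Rr$-Cartier divisor has positive multiplicity along every divisor whose center on $X$ lies in its support; the $\mathbf{B}_+$/multiplier-ideal digression is beside the point --- the unresolved issue in your construction is smallness of the added divisor away from $\Ngklt(X,\Delta,\Mm)$, not its support.
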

\begin{proof}
Let $f:Y\to X$ be a log resolution such that $\Exc(f)\cup f^{-1}\Delta$ is snc and $\Mm$ descends on $Y$. Let $K_Y+\Delta_Y+\Mm_Y=f^*(K_X+\Delta+\Mm_X)$, then $f^*L-(K_Y+\Delta_Y)+\Mm_Y$ is nef and big so there is an effective $\Rr$-divisor $E$ on $Y$ such that 
$$
f^*L-(K_Y+\Delta_Y)+\Mm_Y\sim_{\Rr,U} A_n+\frac{1}{n}E,
$$
where $A_n$ can be chosen to be a sufficiently general effective ample $\Rr$-divisor. By perturbing $A_n$ a little bit we can also assume that $\lf\Delta_Y\rf\subset\Supp E$. Since $\frac{1}{2}(L-K_X-\Delta-\Mm_X)$ is nef and big, there is an effective $\Rr$-divisor $E'$ on $X$ such that 
$$
\frac{1}{2}(L-K_X-\Delta-\Mm_X)\sim_{\Rr,U} A'_n+\frac{1}{n}E',
$$
where $A'_n$ is ample. Then $B:=f_*(\Delta_Y)+\frac{1}{2n}f_*(E+A_n)+\frac{1}{n}E'$ would work for $n\gg0$
\end{proof}

\subsection{Crepant log structures}

\begin{defn}\label{defn: glc crepant log structure}
A \emph{glc crepant log structure} is of the form $f: (X,\Delta,\Mm)\rightarrow Z$, where
\begin{enumerate}
    \item $(X,\Delta,\Mm)/Z$ is a glc g-pair,
    \item $K_X+\Delta+\Mm_X\sim_{\Rr,Z}0$, and
    \item $f$ is a contraction. In particular, $f_*\Oo_X=\Oo_Z$.
\end{enumerate}
In addition, if
\begin{enumerate}
    \item[(4)] $(X,\Delta,\Mm)$ is gdlt, 
\end{enumerate}
then we say that $f: (X,\Delta,\Mm)\rightarrow Z$ is a \emph{gdlt crepant log structure}.

For any irreducible subvariety $W\subset Z$, we say that $W$ is a \emph{glc center} of a glc crepant log structure $f: (X,\Delta,\Mm)\rightarrow Z$, if there exists a glc center $W_X$ of $(X,\Delta,\Mm)$ such that $W=f(W_X)$. For any (not necessarily closed) point $z\in Z$, we say that $z$ is a \emph{glc center} of $f: (X,\Delta,\Mm)\rightarrow Z$ if $\bar z$ is a glc center of $f: (X,\Delta,\Mm)\rightarrow Z$.
\end{defn}

Let us recall two important Lemmas in \cite{LX22b}:

\begin{lem}[{\cite[Lemma 3.17]{LX22b}}]\label{lem: glc locus is unibranch}
Let $f: (X,\Delta,\Mm)\rightarrow Z$ be a glc crepant log structure and $z\in Z$ a (not necessarily closed) point. Let $$\mathcal{S}_z:=\{V\mid V\text{ is a glc center of }f: (X,\Delta,\Mm)\rightarrow Z, z\in V\}.$$
Then:
\begin{enumerate}
    \item There exists a unique element $W\in\mathcal{S}_z$ that is minimal with respect to inclusion. 
    \item $W$ is unibranch at $z$, i.e.  the completion $\hat{W}_z$ is irreducible.
    \item Any intersection of glc centers of $f: (X,\Delta,\Mm)\rightarrow Z$ is also a union of glc centers.
\end{enumerate}
\end{lem}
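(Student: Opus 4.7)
The plan is to follow Kollár's approach to the analogous statement for lc pairs (cf.\ \cite[Sections~4.40--4.45]{Kol13}), using the $\Pp^1$-link technique for generalized pairs developed in \cite[Theorem~1.4]{FS20} (see also \cite[Theorem~3.5]{Bir20}) as the crucial geometric input. First I would reduce to the gdlt setting by passing to a $\Qq$-factorial gdlt modification of $(X,\Delta,\Mm)/Z$, whose existence in the NQC framework is standard; glc centers of $(X,\Delta,\Mm)$ correspond bijectively to images of strata of the gdlt model, so nothing is lost. After this reduction, every minimal glc center is normal, and one may apply the generalized adjunction of \cite{Fil20, HL21b, JLX22} to each stratum as needed.

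For (1), I would employ the \emph{source and springs} philosophy of Kollár adapted to the g-pair setting. Call a glc center $V \subseteq X$ of $(X,\Delta,\Mm)$ a \emph{source of $z$} if $z\in f(V)$ and $V$ is minimal with this property. The key technical input is that any two sources of $z$ are crepant birational through a chain of $\Pp^1$-links that lies entirely over $z$; this follows from \cite[Theorem~1.4]{FS20} (cf.\ \cite[Theorem~3.5]{Bir20}) applied to $(X,\Delta,\Mm)$, combined with the crepant relation $K_X+\Delta+\Mm_X\sim_{\Rr,Z}0$, which forces each $\Pp^1$-link to contract to a point of $Z$. Granted this, define $W := f(V)$ for any source $V$ of $z$; independence of the choice of $V$ is then immediate. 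To check that $W$ is the minimum of $\mathcal{S}_z$, take any $W'\in\mathcal{S}_z$, pick a glc center $V' \subseteq X$ with $f(V')=W'$, and choose a glc subcenter $V_0 \subseteq V'$ that is a source of $z$; then $W = f(V_0) \subseteq f(V') = W'$.

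Part (2) then follows from the structure of $V$: being a stratum of a gdlt g-pair, $V$ is normal, so $f|_V\colon V\to W$ is a proper surjective contraction from a normal variety. The unibranchness of $W$ at $z$ is obtained by a Stein factorization argument together with the formal-local analysis essentially identical to that of \cite[4.40]{Kol13}. For (3), given any irreducible component $T$ of the intersection $W_1\cap W_2$ of two glc centers and letting $z$ be the generic point of $T$, the unique minimal element $W_z \in \mathcal{S}_z$ produced by (1) is contained in both $W_i$, hence in their intersection. Being irreducible and containing $T=\overline{\{z\}}$, and being contained in the component $T$ of $W_1\cap W_2$, necessarily $W_z=T$, so $T$ is itself a glc center of $f$.

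The main obstacle, and the place where the generalized setting departs essentially from the classical lc case, is the first step of (1): producing the chain of $\Pp^1$-links over $z$ connecting two sources. In the lc setting this rests on Kawamata's tie-breaking combined with the connectedness of $\Ngklt$-loci over a point. For generalized pairs, the required $\Pp^1$-link machinery and fiber-connectedness are precisely what \cite{FS20} and \cite{Bir20} supply in the NQC setting. Once this input is in hand, the remaining items are formal consequences requiring only careful bookkeeping.
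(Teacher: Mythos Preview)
The paper does not supply its own proof of this lemma; it is quoted from \cite[Lemma~3.17]{LX22b} without argument (note the prefatory line ``Let us recall two important Lemmas in \cite{LX22b}'' immediately preceding it), so there is no in-paper proof to compare your proposal against. Your outline is nonetheless the correct one and matches the approach of \cite{LX22b}: pass to a $\Qq$-factorial gdlt modification, use the $\Pp^1$-link connectedness of minimal glc centers supplied by \cite[Theorem~1.4]{FS20} (cf.\ \cite[Theorem~3.5]{Bir20}) to obtain (1), and then deduce (2) and (3) formally as in Koll\'ar's treatment \cite[4.40--4.42]{Kol13}; indeed the present paper invokes exactly this $\Pp^1$-link input (via \cite[Theorem~3.16]{LX22b}) elsewhere.
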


\begin{lem}[{\cite[Lemma 3.19]{LX22b}}]\label{lem: gdlt crepant log structure is compatible under subadjunction}
Let $f: (X,\Delta,\Mm)\to Z$ be a gdlt crepant log structure and $Y\subset X$ a glc center. Let
$$
f|_Y: Y\xrightarrow{f_Y}Z_Y\xrightarrow{\pi} Z
$$
be the Stein factorization of $f|_Y$, and $(Y,\Delta_Y,\Mm^Y)/Z$ the gdlt g-pair induced by repeated adjunctions
$$K_Y+\Delta_Y+\Mm_Y^Y:=(K_X+\Delta+\Mm_X)|_Y.$$
Then:
\begin{enumerate}
\item $f_Y: (Y,\Delta_Y,\Mm^Y)\rightarrow Z_Y$ is a gdlt crepant log structure.
\item For any glc center $W_Y\subset Z_Y$ of $f_Y: (Y,\Delta_Y,\Mm^Y)\rightarrow Z_Y$, $\pi(W_Y)$ is a glc center of $f: (X,\Delta,\Mm)\rightarrow Z$.
\item For any glc center $W\subset Z$ of $f: (X,\Delta,\Mm)\rightarrow Z$, every irreducible component of $\pi^{-1}(W)$ is a glc center of  $f_Y: (Y,\Delta_Y,\Mm^Y)\rightarrow Z_Y$.
\end{enumerate}
\end{lem}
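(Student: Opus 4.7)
The plan is to prove the three parts sequentially, relying on the gdlt adjunction formula for g-pairs together with Lemma~\ref{lem: glc locus is unibranch} and the $\Pp^1$-link structure on glc centers of gdlt g-pairs. Parts (1) and (2) are largely formal consequences of adjunction and the Stein factorization, while (3) is the substantive step.

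For (1), the gdlt adjunction formula for g-pairs yields that $(Y,\Delta_Y,\Mm^Y)$ is gdlt, while the Stein factorization provides a contraction $f_Y$ with $(f_Y)_*\Oo_Y=\Oo_{Z_Y}$ and a finite morphism $\pi$. Writing $K_X+\Delta+\Mm_X\sim_\Rr f^*D$ for some $\Rr$-Cartier $\Rr$-divisor $D$ on $Z$ and restricting to $Y$ gives
\[
K_Y+\Delta_Y+\Mm^Y_Y \;\sim_\Rr\; (f|_Y)^*D \;=\; f_Y^*(\pi^*D),
\]
which is the desired $\sim_{\Rr,Z_Y}0$.

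For (2), the precise form of gdlt adjunction for g-pairs matches glc places of $(Y,\Delta_Y,\Mm^Y)$ with glc places of $(X,\Delta,\Mm)$ whose centers lie in $Y$, with equal log discrepancies. Consequently, every glc center of $(Y,\Delta_Y,\Mm^Y)$ is also a glc center of $(X,\Delta,\Mm)$. Given a glc center $W_Y\subset Z_Y$ of $f_Y$ with a witness glc center $Y'\subset Y$ satisfying $f_Y(Y')=W_Y$, the set $Y'$ is then a glc center of $(X,\Delta,\Mm)$, and $f(Y')=\pi(f_Y(Y'))=\pi(W_Y)$ exhibits $\pi(W_Y)$ as a glc center of $f$.

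For (3), which I expect to be the main obstacle, fix a glc center $W\subset Z$ of $f$ and an irreducible component $V$ of $\pi^{-1}(W)$; the goal is to produce a glc center $Y'\subset Y$ of $(Y,\Delta_Y,\Mm^Y)$ with $f_Y(Y')=V$. My plan is to take the generic point $v$ of $V$, choose a preimage $y\in f_Y^{-1}(v)\subset Y$, and let $Y'$ be the minimal glc center of $(X,\Delta,\Mm)$ through $y$. Since $(X,\Delta,\Mm)$ is gdlt, the minimal glc center through a point of $Y$ is automatically contained in $Y$ (inherited from the dlt structure via intersection of $\lf\Delta\rf$-components passing through $y$, all of which contain $Y$), so $Y'$ is a glc center of $(Y,\Delta_Y,\Mm^Y)$ by the bijection from (2). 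The delicate verification is that $f_Y(Y')=V$, as opposed to a strict subset of $V$ or a subset of a different component of $\pi^{-1}(W)$; I expect this to follow from the $\Pp^1$-link theorem for glc centers of gdlt g-pairs (ensuring that all minimal glc centers of $(X,\Delta,\Mm)$ lying over $\eta_W$ are $\Pp^1$-linked in a manner compatible with the Stein factorization) combined with the unibranchness of the minimal glc center of $f$ through $\eta_W$ afforded by Lemma~\ref{lem: glc locus is unibranch}(2), which together force the image $f_Y(Y')$ to remain inside the single component $V$.
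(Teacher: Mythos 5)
Note first that this lemma is not proved in the paper at all: it is recalled verbatim from \cite[Lemma 3.19]{LX22b}, so there is no internal proof to compare your argument with, and it has to be judged on its own terms. Your parts (1) and (2) are fine and are essentially the only reasonable argument: gdlt adjunction together with the Stein factorization gives (1) (restricting $K_X+\Delta+\Mm_X\sim_\Rr f^*D$ gives $\sim_{\Rr,Z_Y}0$), and the matching of glc centers of $(Y,\Delta_Y,\Mm^Y)$ with glc centers of $(X,\Delta,\Mm)$ contained in $Y$ gives (2).

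Part (3), however, has a genuine gap, and you have misidentified where the danger lies. With your choice $y\in f_Y^{-1}(v)$ and $Y'$ the minimal glc center of $(X,\Delta,\Mm)$ through $y$, the image $f_Y(Y')$ is closed and contains $v$, so it automatically contains $V=\overline{\{v\}}$; it can never be a strict subset of $V$ or lie in a different component of $\pi^{-1}(W)$. The actual failure mode is that $f_Y(Y')$ may be \emph{strictly larger} than $V$: nothing in your construction forces $Y'\subset f_Y^{-1}(V)$. For instance, if $\dim Y>\dim Z_Y$ and $y$ is a general point of the fibre $f_Y^{-1}(v)$, then $y$ need not lie on any glc center of $(Y,\Delta_Y,\Mm^Y)$ other than $Y$ itself, so $Y'=Y$ and $f_Y(Y')=Z_Y\supsetneq V$; even a choice of $y$ inside $\Ngklt(Y,\Delta_Y,\Mm^Y)$ (whose nonemptiness over $v$ is itself part of what must be proved) can produce a minimal center dominating more than $V$. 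Neither the $\Pp^1$-link theorem nor the unibranchness in Lemma \ref{lem: glc locus is unibranch}(2) addresses this. What (3) really requires is the existence of a glc center of $(Y,\Delta_Y,\Mm^Y)$ contained in $f_Y^{-1}(V)$ and dominating $V$, i.e.\ one must transport a glc center $W_X$ of $(X,\Delta,\Mm)$ with $f(W_X)=W$ into $Y$ over the generic point of $W$ (and also deal with the fact that $f(Y)$ need not be normal, so components of $\pi^{-1}(W)$ need not even dominate $W$ a priori); this is the nontrivial content of \cite[Lemma 3.19]{LX22b}, proved there with the connectedness/$\Pp^1$-link machinery in the spirit of \cite[4.40--4.45]{Kol13}. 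Finally, you cannot patch the argument by invoking Lemma \ref{lem: connectedness principle and compatibility for glc crepant structure}(2) of the present paper, since that lemma is itself deduced from the statement you are trying to prove.
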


The following theorem is an analogue of \cite[Theorem-Definition 4.45]{Kol13}.

\begin{defthm}\label{thm: spring and source for glc crepant log structure}
Let $(X,\Delta,\Mm)\rightarrow Y$ be a gdlt crepant log structure and $Z\subset Y$ a glc center with normalization $n: Z^n\rightarrow Z$. Let $S\subset X$ be glc center of $(X,\Delta,\Mm)$ which dominates $Z$ and is minimal with respect to inclusion. Let $(S,\Delta_S,\Mm^S)$ be the gdlt g-pair induced by the adjunction
$$K_S+\Delta_S+\Mm^S_S:=(K_X+\Delta+\Mm_X)|_S$$
and let $f^n_S: S\rightarrow Z_S\rightarrow Z^n$ be the Stein factorization. Then: 
\begin{enumerate}
    
    \item (Uniqueness of springs) The isomorphism class of $Z_S$ does not depend on the choice of $S$. It is called the spring of $Z$ and denoted by $\Spr(Z,X,\Delta,\Mm)$, or $\Spr(Z,X)$ for short if there is no confusion.
    \item (Uniqueness of sources) The equivalence class of $S$ does not depend on the choice of $S$. 
    This birational class of $S$ will be called as the \emph{source of $Z$} and is denoted by $\Src(Z,X,\Delta,\Mm)$, or $\Src(Z,X)$ for short if there is no possible confusion.
    \item (Crepant log structure) $(S,\Delta_S,\Mm^S)$ is gdlt, $K_S+\Delta_S+\Mm^S_S\sim_{\Qq,Z}0$, and $(S,\Delta_S,\Mm^S)$ is gklt on the generic fiber of $f|_S$.
    \item (Galois property) The extension $Z_S\to Z$ is Galois and $\Bir_Z(S)\to\Gal(Z_S/Z)$ is surjective.
    \item (Adjunction) Let $W\subset X$ be a glc center, $f_W: W\to W_Y$ and $n_W:W_Y\to Y$ be the Stein factorization of $f|_W$. Let $Z_W\subset W_Y$ be an irreducible subvariety such that $n_W(Z_W)=Z$. Then $Z_W$ is a glc center of $(W,\Delta_W,\Mm^W)$ and 
    \begin{align*}
        \Src(Z,X,\Delta,\Mm)&\sim_{\Bir}\Src(Z_W,W,\Delta_W,\Mm^W)\\
        \Spr(Z,X,\Delta,\Mm)&\simeq\Spr(Z_W,W,\Delta_W,\Mm^W)
    \end{align*}
    
\end{enumerate}
\end{defthm}
\begin{proof}
By \cite[Theorem 3.16]{LX22b}(cf. \cite[Theorem 1.4]{FS20}), different choices of S are $\Pp^1$-linked to each other, hence they are birational, proving (1). This implies (2) while (3) is clear. For (5), note that $Z_W$ is a glc center by Lemma \ref{lem: gdlt crepant log structure is compatible under subadjunction} and we can actually choose representatives such that 
$$
\Src(Z_W,W,\Delta_W,\Mm^W)=\Src(Z,X,\Delta,\Mm),
$$
the rest then follows from (1) and (2). Finally, (4) follows from the proof of \cite[Lemma 4.46]{Kol13}
\end{proof}

\subsection{Generalized log canonical stratification}

We refer to \cite[Section 5, Section 9]{Kol13} and \cite[Section 4]{LX22b} for more details. 


\begin{defn}[{\cite[Definition 9.15]{Kol13}}] 
Let $X$ be a scheme. A {\it stratification} of $X$ is a decomposition of $X$ into a finite disjoint union of reduced locally closed subschemes. We will consider stratifications where the strata are of pure dimensions
and are indexed by their dimensions. We write $X=\cup_{i}S_iX$ where $S_iX\subset X$ is the $i$-th
dimensional stratum. Such a stratified scheme is denoted by $(X,S_*)$. We also
assume that $\cup_{i\le j}S_iX$ is closed for every $j$. The {\it boundary} of $(X,S_*)$ is the closed subscheme
$$
B(X,S_*):=\cup_{i<\dim X}S_iX=X\backslash S_{\dim X}X,
$$
and is denoted by $B(X)$ if the stratification $S_*$ is clear. We call $S_{\dim X}$ the {\it open stratum}.

Let $(X, S_*)$ and $(Y, S_*)$ be stratified schemes. We say that $f:X\to Y$ is a {\it stratified morphism} if $f(S_iX)\subset S_iY$ for every $i$. Since $S_iX$ are disjoint with each other, $f: X\to Y$ is a stratified morphism if and only if $S_iX=f^{-1}(S_iY)$.

Let $(Y, S_*)$ be a stratified scheme and $f:X\to Y$ a quasi-finite morphism such that $f^{-1} (S_iY)$ has pure dimension $i$ for every $i$ . Then $S_iX:=f^{-1}(S_iY)$ defines a stratification of $X$. We denote it by $(X,f^{-1}S_*)$, and we say that $f:X\to(Y,S_*)$ is \emph{stratifiable}.
\end{defn}

\begin{defn}
Let $(X, S_*)$ be a stratified variety. A relation $(\sigma_1,\sigma_2): R\rightrightarrows (X,S_*)$ is {\it stratified} if each $\sigma_i$ is stratifiable and $\sigma_1^{-1}S_*=\sigma_2^{-1}S_*$. Equivalently,
there exists a stratification $(R,\sigma^{-1}S_i)$, such that $r\in\sigma^{-1}S_iR$ if and only if $\sigma_1(r)\in S_iX$ and if and only if $\sigma_2(r)\in S_iX$.
\end{defn}

Next we give a special stratification that is induced by the glc crepant log structure. 

\begin{defn}[Glc stratification]
Let $f:(X,\Delta,\Mm)\to Z$ be a glc crepant log structure. Let $S^*_i(Z,X,\Delta,\Mm)\subset Z$ be the union of all $\le i$-dimensional glc centers of $f:(X,\Delta,\Mm)\to Z$, and
$$
S_i(Z,X,\Delta,\Mm):=S^*_i(Z,X,\Delta,\Mm)~\backslash ~S^*_{i-1}(Z,X,\Delta,\Mm).
$$
If the glc crepant log structure $f:(X,\Delta,\Mm)\to Z$ is clear from the context, we will use $S_i(Z)$ for abbreviation. It is clear that each $S_i(Z)$ is a locally closed subspace of $Z$ of pure dimension $i$, and $Z$ is the disjoint union of all $S_i(Z)$. 

The stratification of $Z$ induced by $S_i(Z)$ is called the \emph{generalized log canonical stratification} (\emph{glc stratification} for short) of $Z$ induced by $f:(X,\Delta,\Mm)\to Z$.  Since this is the only stratification we are going to use in the rest of this paper, we usually will not emphasize the glc crepant structure $f:(X,\Delta,\Mm)\to Z$, and we will denote the corresponding stratified scheme by $(Z,S_*)$. The \emph{boundary} of $(Z,S_*)$ is the closed subspace
$$B(Z,S_*):=Z\backslash S_{\dim Z}(Z)=\cup_{i<\dim Z}S_i(Z).$$
\end{defn}

\section{Crepant log structure with log bigness}

In this section we show that there will be no $\Pp^1$-link in a glc crepant log structure if some mild log bigness assumptions are posed on the g-pair.

\begin{lem}\label{lem: connectedness principle and compatibility for glc crepant structure}
Let $f:(X,\Delta,\Mm)\to Z$ be a glc crepant log structure and $V$ be a glc center on $Z$. Let $W$ and $W'$ be two minimal glc centers on $X$ that dominates $V$. Let $\tilde{W}\subset X$ be another glc center such that $V\subset f(\tilde{W})$ (We allow $X$ itself to be a glc center when $Z=V$). Then the followings hold:
\begin{enumerate}
    \item There exist glc centers $W_0,W_1\cdots,W_n$ and $\hat{W}_1,\cdots,\hat{W}_n$ on $X$ such that $W=W_0\subset\hat{W}_1\supset W_1\subset\cdots\subset\hat{W}_n\supset W_n=W'$ and $f(W_i)=f(\hat{W}_i)=V$.
    \item There exists a glc center $W''\subset X$ such that $W''\subset\tilde{W}$ and $f(W'')=V$, hence we can also choose such $W''$ to be minimal.
\end{enumerate}
\end{lem}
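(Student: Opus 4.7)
\emph{Proof plan.} My strategy for both parts is to reduce to the gdlt setting by taking a gdlt modification $g\colon X'\to X$ of $(X,\Delta,\Mm)$, which exists by standard g-pair MMP (see e.g. \cite{HL21a}). Then $f':=f\circ g\colon(X',\Delta',\Mm)\to Z$ is a gdlt crepant log structure with $K_{X'}+\Delta'+\Mm_{X'}=g^*(K_X+\Delta+\Mm_X)$, and the glc centers on $X$ correspond via $g$ to the non-exceptional glc centers on $X'$ of matching dimension. In this framework the $\Pp^1$-link theorem \cite[Theorem 3.16]{LX22b} and the adjunction Lemma \ref{lem: gdlt crepant log structure is compatible under subadjunction} become directly available on $X'$, and one can push the resulting structures down to $X$ via $g$.

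For part (1), the plan is to apply the $\Pp^1$-link theorem \cite[Theorem 3.16]{LX22b} (cf. \cite[Theorem 1.4]{FS20}) on the gdlt model. First I lift $W$ and $W'$ to minimal glc centers $W_0^\sharp$ and $W_n^\sharp$ of $(X',\Delta',\Mm)$ dominating $V$, with $g(W_0^\sharp)=W$ and $g(W_n^\sharp)=W'$. The $\Pp^1$-link theorem then yields a chain of minimal glc centers $W_0^\sharp,W_1^\sharp,\ldots,W_n^\sharp$ of $(X',\Delta',\Mm)$ all dominating $V$, together with intermediate glc centers $\hat W_1^\sharp,\ldots,\hat W_n^\sharp$ each containing the consecutive pair $W_{i-1}^\sharp,W_i^\sharp$ as the two boundary sections of a $\Pp^1$-link over $V$; in particular each $\hat W_i^\sharp$ itself dominates $V$. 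Setting $W_i:=g(W_i^\sharp)$ and $\hat W_i:=g(\hat W_i^\sharp)$ gives the required chain of glc centers on $X$ with $f(W_i)=f(\hat W_i)=V$ and $W_{i-1}\subset\hat W_i\supset W_i$.

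For part (2), the plan is to apply the adjunction Lemma \ref{lem: gdlt crepant log structure is compatible under subadjunction}. I lift $\tilde W\subset X$ to a glc center $\tilde W^\sharp\subset X'$ with $g(\tilde W^\sharp)=\tilde W$, so that $V\subset f'(\tilde W^\sharp)$. The Stein factorization of $f'|_{\tilde W^\sharp}$ then gives, via Lemma \ref{lem: gdlt crepant log structure is compatible under subadjunction}, a gdlt crepant log structure $f_{\tilde W^\sharp}\colon(\tilde W^\sharp,\Delta_{\tilde W^\sharp},\Mm^{\tilde W^\sharp})\to Z_{\tilde W^\sharp}$ together with a finite map $\pi\colon Z_{\tilde W^\sharp}\to Z$. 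By part~(3) of that lemma, every irreducible component $V'$ of $\pi^{-1}(V)$ is a glc center of $f_{\tilde W^\sharp}$, hence there is a glc center $W^*\subset\tilde W^\sharp$ with $f_{\tilde W^\sharp}(W^*)=V'$, so that $f'(W^*)=\pi(V')=V$; then $W'':=g(W^*)\subset\tilde W$ is a glc center of $(X,\Delta,\Mm)$ with $f(W'')=V$. To secure the minimality claim I will moreover choose $W^*$ to be a source of $V'$ for the crepant log structure $f_{\tilde W^\sharp}$ (which is minimal by Definition-Theorem \ref{thm: spring and source for glc crepant log structure}), and then push down.

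The main obstacle I anticipate is the bookkeeping around the gdlt reduction: I must confirm that the lifts $W_0^\sharp,W_n^\sharp,\tilde W^\sharp$ and the intermediate $\Pp^1$-link centers $\hat W_i^\sharp$ can be chosen non-exceptional so that $g$ preserves their dimensions and images, that every $\hat W_i^\sharp$ maps exactly onto $V$ rather than onto a strictly larger glc center on $Z$, and that the minimality of $W^*$ inside $(\tilde W^\sharp,\Delta_{\tilde W^\sharp},\Mm^{\tilde W^\sharp})$ persists as minimality inside $(X',\Delta',\Mm)$ (and consequently on $X$). Each of these steps is expected from the general philosophy of sources and springs but requires careful verification.
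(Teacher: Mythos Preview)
Your approach is correct and matches the paper's proof exactly: reduce to the gdlt case via a gdlt modification, invoke the $\Pp^1$-link theorem \cite[Theorem 3.16]{LX22b} for (1) and Lemma~\ref{lem: gdlt crepant log structure is compatible under subadjunction}(3) for (2), then push the resulting chain of centers down along $g$. Your listed obstacles are not genuine: you need not choose non-exceptional lifts (images of glc centers under the crepant map $g$ are always glc centers of $(X,\Delta,\Mm)$, so dimension preservation is irrelevant), the $\Pp^1$-link centers $\hat W_i^\sharp$ automatically satisfy $f'(\hat W_i^\sharp)=V$ since they are generically $\Pp^1$-bundles with the minimal centers $W_{i-1}^\sharp, W_i^\sharp$ as sections, and for (2) minimality of $W''$ is immediate once existence is shown---simply replace $W''$ by any minimal glc center of $(X,\Delta,\Mm)$ contained in it and still dominating $V$.
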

\begin{proof}
If $(X,\Delta,\Mm)$ is $\Qq$-factorial gdlt, then (1) follows directly from \cite[Theorem 3.16]{LX22b} (cf. \cite[Theorem 1.4]{FS20}) and (2) follows from Lemma \ref{lem: gdlt crepant log structure is compatible under subadjunction} (3). In the general case we can take a gdlt modification $g:(Y,\Delta_Y,\Mm)\to(X,\Delta,\Mm)$ and replace $\tilde{W}$ (resp. $W$, $W'$) by any (resp. minimal) glc centers on $Y$ that dominates $W$ (resp. $W$, $W'$). Then the statements follow by looking at the images of those glc centers under $g$.
\end{proof}

\begin{thm}\label{thm: unique minimal glc center for log big glc crepant structure}
Let $(X,\Delta,\Mm)/U$ be a glc pair, and $L$ a $\Qq$-Cartier $\Qq$ divisor such that $A:=L-(K_X+\Delta+\Mm_X)$ is nef and log big$/U$ with respect to $(X,\Delta,\Mm)$. Assume that $L$ is semi-ample and defines a projective contraction $\phi: X\to Z$ over $U$. Then $\phi$ can also be regarded as a glc crepant log structure $(X,\Delta,\bar{A}+\Mm)\to Z$. Let $V$ be any glc center on $Z$, then there exists a unique minimal glc center $W$ on $X$ such that
\begin{itemize}
    \item $W$ dominates $V$, or in other words, $\phi(W)=V$.
    \item For any glc center $\tilde{W}$ on $X$ such that $V\subset\phi(\tilde{W})$, we have $W\subset \tilde{W}$.
\end{itemize}
Moreover, let $\tilde{W}\subset X$ be any glc center that dominates $Z$ and $\tilde{W}^n$ be the normalization, then $\tilde{W}^n\to Z$ is a contraction, or equivalently, the Stein factorization of $\tilde{W}^n\to Z$ is trivial.
\end{thm}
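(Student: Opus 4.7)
First I would verify that $\phi\colon(X,\Delta,\bar{A}+\Mm)\to Z$ is a glc crepant log structure: the $\bb$-divisor $\bar{A}+\Mm$ is NQC since $A$ is nef and $\Qq$-Cartier, and because $\bar{A}$ already descends to $X$, pulling back on any log resolution where $\Mm$ descends shows the log discrepancies of $(X,\Delta,\bar{A}+\Mm)$ agree with those of $(X,\Delta,\Mm)$; in particular the glc centers coincide. Combined with $K_X+\Delta+(\bar{A}+\Mm)_X=L\sim_{\Qq,Z}0$ and the hypothesis that $\phi$ is a contraction, this gives the desired structure. After a gdlt modification I may assume $(X,\Delta,\bar{A}+\Mm)$ is gdlt. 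For the existence of a minimal glc center $W$ with $\phi(W)=V$: by definition some glc center $\tilde W\subset X$ satisfies $\phi(\tilde W)=V$, and Lemma~\ref{lem: connectedness principle and compatibility for glc crepant structure}(2) then yields a minimal glc center $W\subset\tilde W$ dominating $V$.

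\textbf{Uniqueness via $\Pp^1$-link.} Suppose for contradiction that $W\ne W'$ are two distinct minimal glc centers both dominating $V$. By Lemma~\ref{lem: connectedness principle and compatibility for glc crepant structure}(1), an induction on the length of the connecting chain reduces to the case where $W\cup W'\subset\hat W$ for some glc center $\hat W$ with $\phi(\hat W)=V$. I would then invoke the $\Pp^1$-link theorem (\cite[Theorem 3.16]{LX22b}, cf.~\cite[Theorem 1.4]{FS20}) applied to the adjunction gdlt pair $(\hat W^n,\Delta_{\hat W},\Mm^{\hat W})$, producing on some birational model of $\hat W^n$ a covering family of rational curves $\{F_t\}$ connecting the strict transforms of $W$ and $W'$ and satisfying $(K_X+\Delta+\Mm_X)\cdot F_t=0$. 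Since $W$ and $W'$ are both sources of $V$, Theorem~\ref{thm: spring and source for glc crepant log structure} identifies their birational equivalence as being compatible with the contractions to $V$, forcing each $F_t$ to be $\phi$-contracted. Thus $L\cdot F_t=0$ and hence
\[
A\cdot F_t=L\cdot F_t-(K_X+\Delta+\Mm_X)\cdot F_t=0.
\]
Since $A|_{\hat W^n}$ is nef and big by log bigness, and a nef big divisor cannot be numerically trivial on a covering family of curves, we obtain the desired contradiction. The second containment property then follows immediately: for any glc center $\tilde W'$ with $V\subset\phi(\tilde W')$, Lemma~\ref{lem: connectedness principle and compatibility for glc crepant structure}(2) produces a minimal glc sub-center $W''\subset\tilde W'$ dominating $V$, and uniqueness forces $W''=W\subset\tilde W'$.

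\textbf{The moreover statement.} Let $\tilde W^n\to Z_{\tilde W}\xrightarrow{\pi}Z$ be the Stein factorization of $\tilde W^n\to Z$ and suppose $\deg\pi>1$. I would choose a proper glc center $V\subsetneq Z$ of $\phi$ not contained in the branch locus of $\pi$. By Lemma~\ref{lem: gdlt crepant log structure is compatible under subadjunction}(3) applied to $\tilde W$, the preimage $\pi^{-1}(V)$ then has at least two distinct irreducible components $V^1\ne V^2$, each a glc center of the adjunction crepant log structure $f_{\tilde W}\colon(\tilde W,\Delta_{\tilde W},\Mm^{\tilde W})\to Z_{\tilde W}$. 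For $i=1,2$, a minimal glc center $W_i\subset\tilde W$ of $f_{\tilde W}$ dominating $V^i$ is, via the identification between glc centers of the adjunction and glc centers of $X$ contained in $\tilde W$, itself a glc center of $X$ with $\phi(W_i)=V$. Since $\dim(V^1\cap V^2)<\dim V$, one checks that $W_1\ne W_2$ persists after replacing each by a minimal glc sub-center of $X$ dominating $V$, contradicting the uniqueness established above. In the degenerate case where every proper glc center of $Z$ lies in the branch locus of $\pi$, one invokes the containment $W\subset\tilde W$ (applied to the unique minimal $W$ dominating $Z$) and the universal property of Stein factorization to reduce the problem to $Z_W=Z$, which in turn follows from the same uniqueness combined with the surjection $\Bir_Z(W)\twoheadrightarrow\Gal(Z_W/Z)$ supplied by Theorem~\ref{thm: spring and source for glc crepant log structure}(4).

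\textbf{Main obstacle.} The delicate point is the uniqueness step: verifying that the curves $F_t$ coming from the $\Pp^1$-link theorem are genuinely $\phi$-contracted. The bare $\Pp^1$-link theorem only guarantees $(K_X+\Delta+\Mm_X)\cdot F_t=0$; the contraction by $\phi$ is extracted from the fact that both $W$ and $W'$ are sources of the same glc center $V$, so their birational identification (and hence the $\Pp^1$-link connecting them) must be compatible with the maps to $V$. Carefully tracing this compatibility through the gdlt gpair construction---to the point of showing that the Mori fiber space produced by the link MMP actually factors through $V$---is where most of the technical work will lie.
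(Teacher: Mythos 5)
Your uniqueness step has a genuine gap at its key identity. The pair $(\hat W^n,\Delta_{\hat W},\Mm^{\hat W})$ to which you apply the $\Pp^1$-link theorem is \emph{not} a crepant log structure over $V$: by adjunction $K_{\hat W^n}+\Delta_{\hat W}+\Mm^{\hat W}_{\hat W^n}=(L-A)|_{\hat W^n}$, which is numerically $-A|_{\hat W^n}$ over $Z$ and not relatively trivial, so the link theorem does not apply to it. The only crepant structure available over $V$ is the one with $\bar A$ added to the nef part, and for that structure the linking curves $F_t$ only satisfy $L\cdot F_t=0$ (automatic, since they are contracted over $V$), \emph{not} $(K_X+\Delta+\Mm_X)\cdot F_t=0$. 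Hence your displayed conclusion $A\cdot F_t=0$ is precisely what needs to be proved and is not supplied by the link theorem; it would require an extra argument, e.g.\ a degree count on the general linking $\Pp^1$, where the presence of two distinct glc-center sections forces the nef part (which contains $\bar A$) to have degree $0$ on $F_t$. Relatedly, the family $\{F_t\}$ covers only the linking models, which have dimension $\dim V+1$ and sweep out intermediate glc centers; it does not cover $\hat W^n$, so the bigness you must contradict is that of $A$ on those intermediate centers (available by log bigness), not of $A|_{\hat W^n}$ as you wrote. The paper avoids all of this by a different route: it argues by induction on dimension, passing to $\hat W^n$ with the subadjunction crepant structure over the Stein factorization $\hat W^n\to V'\to V$ (where nefness and log bigness of $A$ are inherited) and applying the inductive hypothesis to rule out a glc center $\tilde W\subset \hat W$ disjoint from $W$; the $\Pp^1$-link input enters only through Lemma \ref{lem: connectedness principle and compatibility for glc crepant structure}.

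The ``moreover'' part also contains an incorrect step: a glc center $V\subsetneq Z$ not contained in the branch locus of $\pi$ need \emph{not} have reducible preimage $\pi^{-1}(V)$ --- reducibility is governed by the monodromy of the cover along $V$, which can act transitively even when $\pi$ is unramified over $V$ (already for \'etale double covers the preimage of a subvariety avoiding any prescribed divisor can be irreducible). Lemma \ref{lem: gdlt crepant log structure is compatible under subadjunction}(3) only says each component of the preimage is a glc center, not that there are several. Your fallback ``degenerate case'' via the surjection $\Bir_Z(W)\to\Gal(Z_W/Z)$ from Theorem \ref{thm: spring and source for glc crepant log structure}(4) is not an argument as written. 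The paper instead restricts to a general fiber $F$ of $\phi$: every irreducible component of $\tilde W\cap F$ is a glc center of the adjunction pair on $F$, the uniqueness statement applied to $F$ gives a single minimal glc center contained in all of them, so $\tilde W\cap F$ is connected; hence $\tilde W^n\to Z$ has connected general fibers and trivial Stein factorization. You should either adopt that argument or replace your branch-locus step by a correct monodromy/Galois analysis.
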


\begin{proof}
We use induction on the dimension. If $\dim X=1$, then $\phi$ is an isomorphism unless $X=\Pp^1$, for which case the statements are also straightforward.

By shrinking $Z$ we can assume that $V$ is the unique minimal glc center on $Z$. Let $W\subset X$ be a minimal glc center over $Z$, which implies $\phi(W)=V$ by our assumption, then it suffices to prove that any other glc centers $\tilde{W}\subset X$ intersects with $W$, which is equivalent to $W\subset\tilde{W}$ since $W$ is minimal. By Lemma \ref{lem: connectedness principle and compatibility for glc crepant structure} (2) we only need to show this for those minimal $\tilde{W}$ that $\phi(\tilde{W})=V$. We first claim that there is no glc centers $\tilde{W}$ and $\hat{W}$ such that
\begin{itemize}
    \item $\phi(\tilde{W})=\phi(\hat{W})=V$.
    \item $\tilde{W}$ intersects with $\hat{W}$.
    \item $W\subset \hat{W}$ but $\tilde{W}\cap W=\emptyset$.
\end{itemize}
Indeed, if there is such a $\tilde{W}$, then by Lemma \ref{lem: glc locus is unibranch} we can assume $\tilde{W}\subset\hat{W}$. Let $\hat{W}^n$ be the normalization of $\hat{W}$, then by \cite[Theorem 1.2]{HL21b} there is a glc structure $(\hat{W}^n,\Delta_{\hat{W}^n},\Mm^{\hat{W}^n})$ induced by the subadjunction 
$$
(K_X+\Delta+\Mm_X)|_{\hat{W}^n}=K_{\hat{W}^n}+\Delta_{\hat{W}^n}+\Mm_{\hat{W}^n}^{\hat{W}^n}
$$
and the glc centers on $\hat{W}^n$ exactly comes from the pullbacks of glc centers contained in $\hat{W}$. Let $\hat{W}^n\to V'\to V$ be the Stein factorization of $\hat{W}^n\to V$, then the glc centers dominating $V'$ are exactly those dominating $V$. Therefore by the induction hypothesis $\tilde{W}$ should contains $W$ (after pullback to $\hat{W}^n$), which is a contradiction. 

Now by Lemma \ref{lem: connectedness principle and compatibility for glc crepant structure} (1) we have glc centers $W_0,W_1\cdots,W_n$ and $\hat{W}_1,\cdots,\hat{W}_n$ on $X$ such that $W=W_0\subset\hat{W}_1\supset W_1\subset\cdots\subset\hat{W}_n\supset W_n=W'$ and $f(W_i)=f(\hat{W}_i)=V$. Thus by keep using the claim above we see that $W_i$ and $\hat{W}_i$ all contain $W$.

Moreover, we can consider
$$
K_F+\Delta_F+\Mm^F_F=(K_X+\Delta+\Mm_X)|_F
$$
for the general fiber $F$ of $\phi$ and apply the above statements to $(F,\Delta_F,\Mm^F)$. Then we can see that $\tilde{W}|_F$ is connected since any irreducible component of it is a glc center, which will contain a unique common minimal glc center. Therefore $\phi|_{\tilde{W}^n}: \tilde{W}^n\to Z$ must be a contraction.
\end{proof}

Actually Theorem \ref{thm: unique minimal glc center for log big glc crepant structure} holds in a much more general setting:

\begin{thm}
Let $(X,\Delta_1,\Mm^1)/U$ and $(X,\Delta_2,\Mm^2)/U$ be two glc $\Qq$-pairs with exactly the same glc places (centers). Assume that $(K_X+\Delta_1+\Mm^1_X)-(K_X+\Delta_2+\Mm^2_X)$ is log big$/U$ with respect to $(X,\Delta_1,\Mm^1)$. Furthermore, $(K_X+\Delta_1+\Mm^1_X)$ is semi-ample over $U$ and defines a glc crepant log structure $f:X\to Z$. Let $V$ be any glc center on $Z$, then there exists a unique minimal glc center $W$ on $X$ such that
\begin{itemize}
    \item $W$ dominates $V$, or in other words, $\phi(W)=V$.
    \item For any glc center $\tilde{W}$ on $X$ such that $V\subset\phi(\tilde{W})$, we have $W\subset \tilde{W}$.
\end{itemize}
Moreover, let $\tilde{W}\subset X$ be any glc center that dominates $Z$ and $\tilde{W}^n$ be the normalization, then $\tilde{W}^n\to Z$ is a contraction, or equivalently, the Stein factorization of $\tilde{W}^n\to Z$ is trivial.
\end{thm}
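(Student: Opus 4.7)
The plan is to imitate the proof of Theorem \ref{thm: unique minimal glc center for log big glc crepant structure} verbatim, with the nef and log big divisor $A$ there replaced by the log big difference $D := (K_X+\Delta_1+\Mm^1_X)-(K_X+\Delta_2+\Mm^2_X)$. Since $(X,\Delta_1,\Mm^1)$ is glc and $K_X+\Delta_1+\Mm^1_X$ is semi-ample defining $f$, we automatically have $K_X+\Delta_1+\Mm^1_X\sim_{\Qq,Z}0$, so $f\colon (X,\Delta_1,\Mm^1)\to Z$ is a genuine glc crepant log structure, and by hypothesis its glc stratification of $Z$ agrees with that induced by $(X,\Delta_2,\Mm^2)$.

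I would argue by induction on $\dim X$; the one-dimensional case is immediate. After shrinking $Z$, assume $V$ is the unique minimal glc center. Fix a minimal glc center $W$ of $(X,\Delta_1,\Mm^1)$ with $f(W)=V$. By Lemma \ref{lem: connectedness principle and compatibility for glc crepant structure} (2), it suffices to show $W\subset\tilde W$ for every minimal glc center $\tilde W$ with $f(\tilde W)=V$. As in Theorem \ref{thm: unique minimal glc center for log big glc crepant structure}, the crux is the claim that there are no glc centers $\tilde W,\hat W$ on $X$ with $f(\tilde W)=f(\hat W)=V$, $\tilde W\cap\hat W\neq\emptyset$, $W\subset\hat W$, and $\tilde W\cap W=\emptyset$. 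Granting the claim, Lemma \ref{lem: connectedness principle and compatibility for glc crepant structure} (1) produces a chain $W=W_0\subset\hat W_1\supset W_1\subset\cdots\supset W_n=W'$ of centers all mapping to $V$, and iterated application of the claim forces $W\subset W_i$ for all $i$. The final statement about trivial Stein factorization then follows by restricting to a general fiber of $f$, exactly as in Theorem \ref{thm: unique minimal glc center for log big glc crepant structure}.

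To prove the claim, suppose such $\tilde W\subset\hat W$ exist (invoking Lemma \ref{lem: glc locus is unibranch}). Subadjunction applied simultaneously to both g-pair structures on $\hat W^n$ yields
$$
(K_X+\Delta_i+\Mm^i_X)|_{\hat W^n}=K_{\hat W^n}+\Delta_{i,\hat W^n}+\Mm^{i,\hat W^n}_{\hat W^n},\qquad i=1,2.
$$
Their difference is $D|_{\hat W^n}$, which is big on $\hat W^n$ because $\hat W$ is a glc center of $(X,\Delta_1,\Mm^1)$, and big on every glc center $T$ of $(\hat W^n,\Delta_{1,\hat W^n},\Mm^{1,\hat W^n})$ because $T$ is finite over a glc center of $(X,\Delta_1,\Mm^1)$ contained in $\hat W$; hence $D|_{\hat W^n}$ remains log big. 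After Stein factorization $\hat W^n\to V'\to V$ the inductive hypothesis applies and produces a unique minimal glc center of $(\hat W^n,\Delta_{1,\hat W^n},\Mm^{1,\hat W^n})$ over $V'$, forcing $\tilde W$ to contain $W$ and contradicting $\tilde W\cap W=\emptyset$.

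The main technical obstacle is to verify that the two induced subadjunction structures on $\hat W^n$ share the same glc centers, as is required to feed the inductive hypothesis. This should follow from \cite[Theorem 1.2]{HL21b} together with the analysis in \cite{LX22b}: the glc centers of $(\hat W^n,\Delta_{i,\hat W^n},\Mm^{i,\hat W^n})$ are exactly the components of the preimages of glc centers of $(X,\Delta_i,\Mm^i)$ contained in $\hat W$, and these agree by hypothesis. A minor subtlety, which needs to be handled carefully, is that the canonical bundle formula producing the moduli b-divisors $\Mm^{i,\hat W^n}$ depends on $(\Delta_i,\Mm^i)$ individually, so these moduli parts may differ even though the glc centers do not; however, since the log bigness argument and the induction only rely on the set of glc centers (and not on the specific moduli data), this does not obstruct the argument.
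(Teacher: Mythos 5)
Your proposal is correct and follows essentially the same route as the paper: the paper's own proof consists of the single remark that the hypotheses are preserved under adjunction, so one argues by induction on dimension exactly as in Theorem \ref{thm: unique minimal glc center for log big glc crepant structure}, with the nef and log big divisor $A$ replaced by the log big difference of the two generalized log canonical divisors. Your write-up is simply a faithful expansion of that argument (including the check that log bigness and the equality of glc centers descend to $\hat W^n$), so there is nothing to add.
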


\begin{proof}
The assumptions preserve under adjunctions, so we can use induction on dimension and the proof now is exactly the same as that of Theorem \ref{thm: unique minimal glc center for log big glc crepant structure}.
\end{proof}

\section{Gluing relations on glc crepant log structures}

Before giving the proof of Theorem \ref{thm: main thm 1.1}, we need to make some preparations in order to describe the relations on those glc centers in a clear way. We will keep using the notations in the whole section.

\vspace{.5em}

Let $(X,\Delta,\Mm)/U$ be a glc pair and let $f:(Y,\Delta_Y,\Mm)\to(X,\Delta,\Mm)$ be a $\Qq$-factorial gdlt modification. Let $(X,S_*)$ and $(Y,S_*)$ be the natural glc stratifications. Let $W:=\lf\Delta_Y\rf$ be the boundary $B(Y)$ of $(Y,S_*)$. Then $W$ is demi-normal and let $\pi: W^n\to W$ be the normalization. Let $D$ be the double-normal locus on $W^n$, $D^n$ the normalization of $D$ and $\tilde{\tau}:D^n\to D^n$ the induced involution. We see that $\tilde{\tau}$ induces a gluing relation $R(\tilde{\tau})$ on $(W^n,S_*)$, this relation is finite and $W^n/R(\tilde{\tau})=W$.

Write $W^n=\coprod W_i$, where $W_i$ is irreducible for any $i$. Let $W_i\stackrel{f_i}{\longrightarrow} V_i\stackrel{n_i}{\longrightarrow}X$ be the Stein factorization of $f|_{W_i}$. Then we have the contraction:
$$
\amalg f_i: \coprod W_i \to \coprod V_i
$$

For any glc center $V_{i,\alpha}\subset V_i$, we define $\tilde{V}_{i,\alpha}:=\Spr(V_{i,\alpha},W_i)$ and let $h_{i,\alpha}:\tilde{V}_{i,\alpha}\to V_{i,\alpha}$ be the corresponding finite morphism, which is stratified by Lemma \ref{lem: gdlt crepant log structure is compatible under subadjunction}.

Let $W_{i,\alpha,k}\subset W_i$ be a minimal glc center that dominates $V_{i,\alpha}$ and if there is a gluing relation 
$$
\tilde{\tau}_{i,\alpha,k,T}:W_{i,\alpha,k}\to T
$$
where $T$ is a glc center of $W_j$ for some $j$. Let $f_j(T)=V_{j,\beta}$, since we have the following commutative diagram:

\begin{displaymath}
    \xymatrix{
        W_{i,\alpha,k} \ar@{^(->}[dr]\ar@{->}[d]_{\tilde{\tau}_{i,\alpha,k,T}} \\
        T \ar@{^(->}[r] & Y
    }
\end{displaymath}
We see that $n_j|_{V_{j,\beta}}\circ f_j|_{T}=f|_{T},~n_i|_{V_{i,\alpha}}\circ f_i|_{W_{i,\alpha,k}}=f|_{W_{i,\alpha,k}}$ and $f|_{W_{i,\alpha,k}}=f|_T\circ\tilde{\tau}_{i,\alpha,k,T}$, thus $T$ is also a minimal glc center that dominates $V_{j,\beta}$ since $W_{i,\alpha,k}$ is a minimal glc center that dominates $V_{i,\alpha}$. Hence $\tilde{\tau}_{i,\alpha,k,T}$ induces an isomorphism $\tau_{i,\alpha,k,T}: \tilde{V}_{i,\alpha}\to\tilde{V}_{j,\beta}$ as follows: 
\begin{displaymath}
    \xymatrix{
        W_{i,\alpha,k} \ar[r]\ar[d]_{\tilde{\tau}_{i,\alpha,k,T}}& \tilde{V}_{i,\alpha}\ar[r]^{h_{i,\alpha}}\ar[d]_{\tau_{i,\alpha,k,T}} & V_{i,\alpha} \\
        T \ar[r]             & \tilde{V}_{j,\beta}\ar[r]^{h_{j,\beta}}  & V_{j,\beta}
    }
\end{displaymath}
Notice that $\tau_{i,\alpha,k,T}$ is isomorphic on glc centers and gives a relation $\{h_{i,\alpha}(x)\sim h_{j,\beta}(\tau_{i,\alpha,k,T}(x))~|~x\in \tilde{V}_{i,\alpha}\}$ between $V_{i,\alpha}$ and $V_{j,\beta}$, which is the same as 
$$
\{f_i(x)\sim f_j(\tilde{\tau}_{i,\alpha,k,T}(x))~|~x\in W_{i,\alpha,k}\}.
$$

If $T'\subset W_i$ is another glc center such that $V_{i,\alpha}\subset f_i(T')$ with a gluing relation $\tilde{\tau}_{T',T''}:T'\to T''$, where $T''$ is a glc center on $W_j$, then by Lemma \ref{lem: connectedness principle and compatibility for glc crepant structure}(2) we know that there is an minimal glc center $W_{i,\alpha,k}\subset T'$ that dominates $V_{i,\alpha}$. Since $\tilde{\tau}_{T',T''}$ is isomorphic on glc centers, we get a gluing relation $\tilde{\tau}_{i,\alpha,k,T}:W_{i,\alpha,k}\to T$ by restriction. Let $f_j(T)=V_{j,\beta}$ Then we can easily see that the relation
$$
\{f_i(x)\sim f_j(\tilde{\tau}_{T',T''}(x))~|~x\in f_i^{-1}(V_{i,\alpha})\cap T'\}
$$
between $V_{i,\alpha}$ and $V_{j,\beta}$ is the same as the one given by the $\tau_{i,\alpha,k,T}$ 
above. 

Therefore it suffices to consider all possible $\tau_{i,\alpha,k,T}$ and let $R(\tau)$ be the induced pro-finite relation on $\coprod V_i$. Then $R(\tau)$ will reflect the relation generated by $\tilde{\tau}$ and $\coprod f_i$. Notice that $W^n/R(\tilde{\tau})=W$, and all $f_i$ come from $f$, the relation actually equals to
$$
\{f_i(x)\sim f_j(y)~|~x\in W_i, y\in W_j, \pi(x)=\pi(y)\}
$$
Since $-(K_Y+\Delta_Y+\Mm_Y)$ is nef and big over $X$, we also have $f_*\Oo_{W}=\Oo_{f(W)}$. Then it is not hard to see the pro-finite relation $R(\tau)$ is actually a finite relation (by looking at the geometric points of a fiber) and $f(W)=\coprod V_i/R(\tau)$ is the geometric quotient (see for example \cite[Propostion 3.12]{HX13}).

\begin{thm}\label{thm: semi-ampleness for glc crepant log structure with log bigness}
Let $(X,\Delta,\Mm)/U$ be a glc $\Qq$-pair, and $L$ a nef $\Qq$-divisor such that $A:=L-(K_X+\Delta+\Mm_X)$ is nef and log big$/U$ with respect to $(X,\Delta,\Mm)$. Then $L$ is semi-ample over $U$. 
\end{thm}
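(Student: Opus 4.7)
The plan is to proceed by induction on $\dim X$, reducing semi-ampleness on $X$ to semi-ampleness on the non-gklt locus, and then using Koll\'ar's gluing theory to descend semi-ampleness from the normalization of that locus.

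First I would invoke Lemma \ref{lem: perturb glc pair to nlc pair} to find an effective $\Rr$-divisor $B$ with $L-K_X-B$ ample$/U$ and $\Nlc(X,B)=\Ngklt(X,\Delta,\Mm)=:V$ as subschemes. By Fujino's non-lc base point free theorem (\cite[Theorem 13.1]{Fuj11}), sections of $mL$ on $V$ lift to $X$ for $m$ sufficiently divisible, so it is enough to prove $L|_V$ is semi-ample$/U$. I would then take a $\Qq$-factorial gdlt modification $f: (Y,\Delta_Y,\Mm)\to(X,\Delta,\Mm)$, so that $V=f(W)$ for $W=\lfloor\Delta_Y\rfloor=B(Y)$, and let $\pi: W^n=\coprod W_i\to W$ be the normalization, with gluing involution $\tilde{\tau}$ on the double-normal locus $D^n$ generating a finite relation $R(\tilde{\tau})$ with $W^n/R(\tilde{\tau})=W$. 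Adjunction endows each $W_i$ with a gdlt g-pair structure $(W_i,\Delta_{W_i},\Mm^{W_i})$, and $L|_{W_i}-(K_{W_i}+\Delta_{W_i}+\Mm^{W_i}_{W_i})$ remains nef and log big$/U$. By the induction hypothesis $L|_{W_i}$ is semi-ample$/U$, defining a contraction $f_i: W_i\to V_i$ that realizes $f_i: (W_i,\Delta_{W_i},\Mm^{W_i})\to V_i$ as a glc crepant log structure.

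The heart of the argument is to construct the geometric quotient of $\coprod V_i$ by the stratified relation $R(\tau)$ generated by the isomorphisms $\tau_{i,\alpha,k,T}:\tilde{V}_{i,\alpha}\to\tilde{V}_{j,\beta}$ described in Section 4, and to verify via \cite[Theorem 9.21]{Kol13} that $R(\tau)$ is finite. The main obstacle is controlling automorphisms of glc centers $V_{i,\alpha}\subset V_i$ induced by the $\tau_{i,\alpha,k,T}$ when several distinct minimal glc centers of $W_i$ dominate the same $V_{i,\alpha}$; as Example \ref{ex: relation is not finite in general} shows, such induced automorphisms can have infinite order in the absence of extra hypotheses. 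This is precisely where Theorem \ref{thm: unique minimal glc center for log big glc crepant structure} intervenes: applied to each glc crepant log structure $f_i: W_i\to V_i$ (whose log bigness is inherited from that of $L-(K_X+\Delta+\Mm_X)$), it furnishes a \emph{unique} minimal glc center $W_{i,\alpha}\subset W_i$ dominating any prescribed glc center $V_{i,\alpha}\subset V_i$, and shows that every glc center dominating $V_{i,\alpha}$ contains $W_{i,\alpha}$. Consequently every $\tau_{i,\alpha,k,T}$ arises as the descent of a single gluing isomorphism $\tilde{\tau}:W_{i,\alpha}\to W_{j,\beta}$ between these unique minimal sources inside $W^n$, and so the stabilizer of each stratum of $\coprod V_i$ under $R(\tau)$ injects into the corresponding stabilizer inside the finite relation $R(\tilde{\tau})$ on $W^n$. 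Thus $R(\tau)$ is finite and the geometric quotient $q:\coprod V_i\to Z'=\coprod V_i/R(\tau)$ exists, with $Z'=V$ by the identification $f(W)=\coprod V_i/R(\tau)$ recorded at the end of Section 4.

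To conclude I would run the same gluing machinery on the total space of $\Oo_{W^n}(mL|_{W^n})$ for $m$ sufficiently divisible: the isomorphisms lifting $\tilde{\tau}$ and $\tau_{i,\alpha,k,T}$ equip this total space with a compatible relation, whose quotient produces a line bundle $H$ on $Z'=V$ with $q^*H=mL|_{\coprod V_i}$, hence $mL|_V$ descends to $H$. Ampleness of $L|_{W_i}$ along the fibers of $f_i$ together with the semi-ampleness statement defining $f_i$ makes $H$ ample on each $V_i$, and compatibility under the finite gluing forces $H$ to be ample on $Z'$. Therefore $mL|_V$ is the pullback of an ample bundle, so $L|_V$ is semi-ample$/U$, which combined with Fujino's lifting completes the induction and shows $L$ is semi-ample$/U$.
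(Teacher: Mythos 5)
Your overall architecture (reduce to the non-gklt locus via Lemma \ref{lem: perturb glc pair to nlc pair} and \cite[Theorem 13.1]{Fuj11}, pass to a gdlt modification, glue via \cite[Theorem 9.21]{Kol13}, and descend a line bundle through total spaces) matches the paper, but the step that carries all the weight is wrong. You apply the induction hypothesis and Theorem \ref{thm: unique minimal glc center for log big glc crepant structure} directly to the boundary components $W_i$ of the gdlt modification, claiming that $L|_{W_i}-(K_{W_i}+\Delta_{W_i}+\Mm^{W_i}_{W_i})=f^*A|_{W_i}$ "remains nef and log big". Log bigness is \emph{not} preserved under pullback to a $\Qq$-factorial gdlt model: a glc center of $(Y,\Delta_Y,\Mm)$ (hence of $(W_i,\Delta_{W_i},\Mm^{W_i})$) can map with positive-dimensional fibers onto a lower-dimensional glc center of $(X,\Delta,\Mm)$, and then $f^*A$ restricted to it is not big; indeed if $W_i$ is $f$-exceptional over a small center, $f^*A|_{W_i}$ itself fails to be big. (The paper flags exactly this obstruction in the introduction.) So neither the inductive semi-ampleness of $L|_{W_i}$ nor Theorem \ref{thm: unique minimal glc center for log big glc crepant structure} is available for $W_i\to V_i$, and the conclusion you extract from it --- a \emph{unique} minimal glc center of $W_i$ dominating a given center downstairs, contained in every glc center dominating it --- is in fact false on gdlt models in general: distinct $\Pp^1$-linked minimal glc centers of $W_i$ routinely dominate the same image (this is precisely why sources and springs are only well defined up to birational/iso equivalence). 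The paper works one level down: the induction and Theorem \ref{thm: unique minimal glc center for log big glc crepant structure} are applied to the $V_i$ given by the Stein factorizations of $f|_{W_i}$ over $X$, which are \emph{finite} over glc centers of $(X,\Delta,\Mm)$, so by Lemma \ref{lem: gdlt crepant log structure is compatible under subadjunction} the restriction of $A$ stays log big for the structure induced by subadjunction, and uniqueness of the minimal center $V_{i,\alpha}\subset V_i$ over each $Z_{i,\gamma}$ is what allows relations between the springs $\tilde Z_{i,\gamma}$ to be lifted to the finite relation $R(\tau)$ on the springs $\tilde V_{i,\alpha}$.

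There is also a conflation of the two quotients that makes your conclusion internally inconsistent. In the paper there are two gluing steps: the relation $R(\tau)$ on $\coprod V_i$ (finite over $X$), whose finiteness is easy and whose quotient recovers $f(W)=V$; and the induced relation $R(\sigma)$ on $\coprod Z_i$, the targets of the contractions $g_i$ defined by $L|_{V_i}$, whose finiteness is the genuine difficulty and whose quotient is a \emph{new} variety $Z$ together with $g\colon V\to Z$. You define $V_i$ as the target of the contraction given by $L|_{W_i}$ (so morally the paper's $Z_i$), yet you identify $\coprod V_i/R(\tau)$ with $V$ and then produce an ample $H$ on $Z'=V$ with $q^*H=mL$; read literally this would assert that $mL|_V$ is ample, which is not what is being proved and cannot hold in general. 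The ample bundle must live on the quotient $Z$ of the $Z_i$'s, with $mL|_V$ equal to its pullback under $g\colon V\to Z$; to get that quotient you need the finiteness of $R(\sigma)$, i.e.\ exactly the lifting argument via unique minimal centers on the $V_i$ that your proposal bypasses.
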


\begin{proof}
There is a natural glc stratification $(X,S_*)$ on $X$. Then by Lemma \ref{lem: perturb glc pair to nlc pair} and \cite[Theorem 13.1]{Fuj11} it suffices to prove that $L|_{B(X)}$ is semi-ample. Now we consider the new g-pair $(X,\Delta,\bar{A}+\Mm)$, which induces the same glc stratification, and fix a $\Qq$-factorial gdlt modification $f:(Y,\Delta_Y,\bar{A}+\Mm)\to(X,\Delta,\bar{A}+\Mm)$. 

We use the notations above, by induction on the dimension we can assume that $L|_{V_i}$ is semi-ample and defines a contraction $g_i:V_i\to Z_i$, so we have
$$
\amalg g_i: \coprod V_i\to\coprod Z_i.
$$

Notice that the composition $W_i\to V_i\to Z_i$ is a gdlt crepant log structure induced by $K_Y+\Delta_Y+\Mm_Y+f^*A$. Then for any glc center $Z_{i,\gamma}\subset Z_i$, we define $\tilde{Z}_{i,\gamma}:=\Spr(Z_{i,\gamma},W_i)$ and let $p_{i,\gamma}:\tilde{Z}_{i,\gamma}\to Z_{i,\gamma}$ be the canonical finite stratified morphism. 

By Theorem \ref{thm: unique minimal glc center for log big glc crepant structure}, there exists a unique minimal glc center $V_{i,\alpha}\subset V_i$ that dominates $Z_{i,\gamma}$. And we have the following diagram:
\begin{displaymath}
    \xymatrix{
        \tilde{V}_{i,\alpha}\ar[r]^{h_{i,\alpha}}\ar[d]_{\tilde{g}_{i,\gamma}}& V_{i,\alpha}\ar@{^(->}[r]\ar[d]& V_i\ar[d]_{g_i} \\
        \tilde{Z}_{i,\gamma} \ar[r]^{p_{i,\gamma}}& Z_{i,\gamma}\ar@{^(->}[r] & Z_i
    }
\end{displaymath}
Then it is easy to see that $\tilde{g}_{i,\gamma}$ is a contraction by considering the gdlt crepant log structure $W_i\to Z_i$. Assume that there is a gluing relation $\tau_{i,\alpha,k,T}:\tilde{V}_{i,\alpha}\to\tilde{V}_{j,\beta}$. Since the pullback of $L|_W$ from $W$ is indeed $\tau_{i,\alpha,k,T}$ invariant, $V_{j,\beta}\subset V_j$ is also the unique minimal glc center that dominates $Z_{j,\theta}=g_j(V_{j,\beta})$. Hence we have a induced gluing relation $\sigma_{i,\alpha,k,T}:\tilde{Z}_{i,\gamma}\to\tilde{Z}_{j,\theta}$ as follows:
\begin{displaymath}
    \xymatrix{
        W_{i,\alpha,k} \ar[r]\ar[d]_{\tilde{\tau}_{i,\alpha,k,T}}& \tilde{V}_{i,\alpha}\ar[r]\ar[d]_{\tau_{i,\alpha,k,T}} & \tilde{Z}_{i,\gamma}\ar[d]_{\sigma_{i,\alpha,k,T}}\ar[r]^{p_{i,\gamma}}&Z_{i,\gamma} \\
        S \ar[r]             & \tilde{V}_{j,\beta}\ar[r] & \tilde{Z}_{j,\theta}\ar[r]^{p_{j,\theta}}& Z_{j,\theta}
    }
\end{displaymath}

This gives a relation 
$$
\{p_{i,\gamma}(x)\sim p_{j,\theta}(\sigma_{i,\alpha,k,T}(x))~|~x\in\tilde{Z}_{i,\gamma}\}
$$
between $Z_{i,\gamma}$ and $Z_{j,\theta}$. As we have discussed above, the relation induced by $\coprod g_i$ and $R(\tau)$ is generated by all the $\sigma_{i,\alpha,k,T}$, which we use $R(\sigma)$ to denote. Now we want to show that $R(\sigma)$ is a finite relation. It suffices to check on each open stratum of $\tilde{Z}_{i,\gamma}$. Notice that any relation on the open stratum of $\tilde{Z}_{i,\gamma}$ itself is induced by the following form
$$
\sigma_{i_0,\gamma_0,i_1,\gamma_1}\circ\cdots\circ\sigma_{i_{n-1},\gamma_{n-1},i_n,\gamma_n},
$$
where 
$\sigma_{i_l,\gamma_{l},i_{l+1},\gamma_{l+1}}:\tilde{Z}_{i_l,\gamma_l}\to\tilde{Z}_{i_{l+1},\gamma_{l+1}}$ is the isomorphism induced by some $\tau_{i,\alpha,k,T}$ in $R(\tau)$ and $(i_0,\gamma_0)=(i_n,\gamma_n)=(i,\gamma)$. Then by Theorem \ref{thm: unique minimal glc center for log big glc crepant structure} for each $Z_{i_l,\gamma_l}$, there is a unique minimal glc center $V_{i_l,\alpha_l}\subset V_{i_l}$ that dominates $Z_{i_l,\gamma_l}$. Therefore this relation lifts to a relation $\tau_{i_0,\alpha_0,i_1,\alpha_1}\circ\cdots\circ\tau_{i_{n-1},\alpha_{n-1},i_n,\alpha_n}$ in $R(\tau)$:
\begin{displaymath}
    \xymatrix{
        \tilde{V}_{i,\alpha}\ar@{=}[r]&\tilde{V}_{i_0,\alpha_0} \ar[rr]^{\tau_{i_0,\alpha_0,i_1,\alpha_1}}\ar[d]& & \tilde{V}_{i_1,\alpha_1}\ar[r]|\cdots\ar[d] & \tilde{V}_{i_n,\alpha_n}\ar[d]\ar@{=}[r]&\tilde{V}_{i,\alpha} \\
        \tilde{Z}_{i,\gamma}\ar@{=}[r]&\tilde{Z}_{i_0,\gamma_0} \ar[rr]^{\sigma_{i_0,\gamma_0,i_1,\gamma_1}}& & \tilde{Z}_{i_1,\gamma_1}\ar[r]|\cdots & \tilde{Z}_{i_n,\gamma_n}\ar@{=}[r]&\tilde{Z}_{i,\gamma} 
    }
\end{displaymath}
Since the relation $R(\tau)$ is finite, so does $R(\tau)|_{\tilde{V}_{i,\alpha}}$, thus the relation $R(\sigma)$ on the open stratum of $\tilde{Z}_{i,\gamma}$ is also finite. Then by the construction we can see that $R(\sigma)$ is actually a finite, set theoretic, stratified equivalence relation (cf. \cite[Lemma 4.14]{LX22b}). Therefore by \cite[Theorem 9.21]{Kol13} we know that the geometric quotient $Z:=\coprod Z_i/R(\sigma)$ exists and there is a morphism $g:f(W)\to Z$ over $U$. 

Let $m$ be sufficiently divisible such that $mL$ is Cartier and $M_i:=mL|_{V_i}$ defines $g_i:V_i\to Z_i$ for each $i$. Then as in \cite[Construction 4.13]{LX22b}, we can consider the total spaces $V_{M_i}$ of the line bundles $M_i$ over $V_i$, and the total spaces of $Z_{H_i}$ of the line bundles $H_i$ over $Z_i$, where $H_i$ is very ample and $g^*_iH_i=M_i$. Similarly, we can define the corresponding relation $R(\tau_M)$ on $\coprod V_{M_i}$ and the induced $R(\sigma_H)$ on $\coprod Z_{H_i}$. Then by applying the same statements above we can deduce that $R(\sigma_H)$ is also a finite, stratified equivalence relation. Possibly by replacing $m$ with a multiple, the geometric quotient $\coprod Z_{H_i}/R(\sigma_H)$ exists by \cite[Theorem 9.21]{Kol13} and is the total space of an ample line bundle $H_Z$ over $Z$, where $g^*H_Z=mL|_{f(W)}$. Therefore $L|_{f(W)}$ is semi-ample over $U$ and we are done.
\end{proof}

\begin{proof}[Proof of Theorem \ref{thm: bpf thm for g-pairs}]
We use induction on the dimension.
By Lemma \ref{lem: perturb glc pair to nlc pair} and \cite[Theorem 13.1]{Fuj11} (cf. \cite[Theorem 7.2]{Amb03}) we only need to show that $\Oo_V(mL)$ is globally generated over $U$ for $m\gg0$, where $V=\Ngklt(X,\Delta,\Mm)$. We will use the notations in the proof of Theorem \ref{thm: semi-ampleness for glc crepant log structure with log bigness}, by which we know that $L$ is semi-ample over $U$. 

By the induction hypothesis, we know that for any $m\gg0$, $mL|_{V_i}$ induces the contraction $g_i:V_i\to Z_i$, thus $L_i:=L|_{V_i}=g_i^*K_i$ for some ample line bundle on $Z_i$. Let $V_{L_i}$ (resp. $Z_{K_i}$) be total space of the line bundle $L_i$ (resp. $K_i$) over $V_i$ (resp. $Z_i$), and $R(\tau_L)$ (resp. $R(\sigma_K)$) the corresponding gluing relation. Then $\coprod V_{L_i}/R(\tau_L)$ is just the total space of the line bundle $L|_{V}$ over $V$. Since all the relation $R(\sigma_K)$ could lift to a relation on $R(\tau_L)$, it implies that $\coprod Z_{K_i}/R(\sigma_K)$ is also the total space of an ample line bundle $H_Z$ over $Z$, where $g^*H_Z=L|_V$. Therefore $mH_Z$ is very ample for $m\gg0$ by Serre Vanishing and Castelnuovo–Mumford regularity (cf. \cite[1.8.22]{Laz04}), which shows that $mL|_V=g^*(mH_Z)$ is globally generated for all $m\gg0$. 
\end{proof}

\begin{proof}[Proof of Theorem \ref{thm: K_X+B+M+A is semi-ample for A ample}]
Since ampleness is an open condition, we can assume that $(X,\Delta,\Mm)$ is a glc $\Qq$-pair by using Shokurov-type rational polytopes (cf. \cite[Proposition 3.16]{HL22} and \cite[Lemma 5.3]{HLS19}). Then the statement follows by Theorem \ref{thm: semi-ampleness for glc crepant log structure with log bigness}.
\end{proof}

\begin{proof}[Proof of Theorem \ref{thm: semi-ampleness with good B+(A)}]
By the definition of $\mathbf{B}_+(A/U)$ (\cite[Definition 3.5.1]{BCHM10}), we can write 
$$
A\sim_{\Rr,U}H+E
$$
such that $H$ is ample and $E\ge0$ contains no glc center of $(X,\Delta+A,\Mm)$. Then for any sufficiently small $\epsilon>0$, $(X,\Delta+A+\epsilon E,\Mm)$ is still glc and its glc centers are the same as those of $(X,\Delta+A,\Mm)$. Therefore we can let $L=(K_X+\Delta+A+\Mm_X)$ and consider the glc pair 
$$
(X,\Delta+(1-\epsilon)A+\epsilon E,\Mm),
$$
then the statement follows from Theorem \ref{thm: K_X+B+M+A is semi-ample for A ample}.
\end{proof}

\begin{proof}[Proof of Theorem \ref{thm: Contraction thm for g-pairs}]
By using Shokurov-type rational polytopes (cf. \cite[Proposition 3.16]{HL22} and \cite[Lemma 5.3]{HLS19}) we can assume that $(X,\Delta,\Mm)$ is a glc $\Qq$-pair. Since $R$ is an $(K_X+\Delta+\Mm_X)$-negative extremal ray, there exists an ample $\Qq$-divisor $A$ such that $H:=K_X+\Delta+\Mm_X+A$ is nef and $R=\overline{\NE}(X/U)\cap H^\perp$. Then by Theorem \ref{thm: semi-ampleness for glc crepant log structure with log bigness} $H$ is semi-ample and defines a contraction $\varphi_H:X\to Z$ over $U$. Let $L$ be a line bundle on $X$ such that $L\cdot R=0$. Then both $H-(K_X+\Delta+\Mm_X)$ and $L+H-(K_X+\Delta+\Mm_X)$ is ample over $Z$. Hence by Theorem \ref{thm: bpf thm for g-pairs} $\Oo_X(mL+mH)$ and $\Oo_X(mH)$ are both globally generated over $Z$ for $m\gg0$. Then it is not hard to see that 
$$
\Oo_X(mL+mH)\sim_{\varphi_H}\Oo_X(mH)\sim_{\varphi_H}\Oo_X
$$ 
for all $m\gg0$, so $\Oo_X(mL)\sim_{\varphi_H}\Oo_X$ for all $m\gg0$ and therefore 
$$
\Oo_X(L)=\Oo_X((m+1)L)\otimes\Oo_X(-mL)
$$ is a pullback of a line bundle on $Z$.
\end{proof}

\begin{proof}[Proof of Theorem \ref{thm: mmp for glc pairs}]
If $K_X+\Delta+\Mm_X$ is not nef, then by \cite[Theorem 1.3]{HL21a} there exists a $(K_X+\Delta+\Mm)$-negative extremal ray $R$ in $\overline{\NE}(X/U)$. Now by Theorem \ref{thm: Contraction thm for g-pairs} we have a projective contraction $f:X\to Z$ contracting $R$. 

If $\dim X>\dim Z$, then $f:X\to Z$ is a Mori fiber space and we are done. If $f$ is birational then by \cite[Theorem 1.1]{LX22b} we know there exist a canonical model 
$$
X':=\proj_{Z}\oplus_{m\ge0}f_*\Oo_{X}(m(K_X+\Delta+\Mm_X))
$$
over $Z$. In particular, $K_{X'}+\Delta'+\Mm_{X'}$ is $\Rr$-Cartier, where $\Delta'$ is the birational transform. Then we can do the same thing to $(X',\Delta',\Mm)$ and keep running the MMP (see \cite[Section 4.9]{Fuj17} for more details).

\end{proof}

\end{document}